\documentclass[11pt]{article}
\usepackage{stmaryrd}
\usepackage{epic,eepic}
\usepackage{geometry}

\usepackage{float}
\usepackage{amsmath}
\usepackage{amssymb}
\usepackage{amsthm}
\usepackage{graphicx} 
\usepackage{psfrag} 
\usepackage{enumerate}

\newcommand{\toi}{\mathcal{T}_i}

\def\ds{\displaystyle}

\def\O{\Omega}
\def\Oun{\Omega_1}
\def\Odeux{\Omega_2}
\def\G{\Gamma}

\def\dt{\delta t}

\def\R{\mathbb{R}}
\def\T{\mathcal{T}}
\def\P{\mathcal{P}}
\def\U{\mathcal{U}}
\def\Q{\mathcal{Q}}
\def\E{\mathcal{E}}
\def\K{\mathcal{K}}

\def\N{\mathcal{N}}
\def\eps{\varepsilon}
\def\meas{\text{meas}}
\def\diam{\text{diam}}

\def\demi{\frac{1}{2}}
\def\3demi{\frac{3}{2}}
\def\sumi{\sum_{i=1}^2}
\def\tp{\tilde{p}}
\def\tu{\tilde{u}}

\def\dxy{d(x_K,y_\eps)}
\def\sumk{\sum_{k=1}^{\K}}
\def\sumeps{\sum_{\eps\in\E}}
\def\jp12{{j+\demi}}
\def\jm12{{j-\demi}}

\newcommand\dpdt[1]{\frac{\partial p^{#1}}{\partial t}}
\newcommand\dpdn[1]{\frac{\partial p^{#1}}{\partial n_{#1}}}
\newcommand\sumn[1]{\sum_{n=0}^{N_{#1}-1}}
\newcommand\sumK[1]{\sum_{K \in \T_{#1}}}
\newcommand\sumKp[1]{\sum_{K' \in \mathcal{N}_{#1}(K)}}
\newcommand\sumE[1]{\sum_{\eps \in \E_{#1}(K)}}

\newcommand\sumEDE[1]{\sum_{\eps \in \E_{#1}(K)\cup\E_{#1D}(K)}}

\def\lprod{\left\langle}
\def\rprod{\right\rangle_{L^2(0,T;L^2(\G))}}

\newtheorem{assumption}{Assumption}[section]
\newtheorem{notation}[assumption]{Notation}
\newtheorem{theorem}[assumption]{Theorem}
\newtheorem{lemma}[assumption]{Lemma}
\newtheorem{definition}{Definition}[section]

\title{Local time steps for a finite volume scheme}

\author{
 I.~Faille, \thanks{ Institut Français du P\'etrole, Rueil-Malmaison, France. 
  isabelle.faille@ifp.fr}
  \and 
  F.~Nataf\thanks{Laboratoire J.L.~Lions, CNRS UMR7598, Universit\'e Pierre et Marie Curie, France. 
  nataf@ann.jussieu.fr}
  \and
   F.~Willien \thanks{Institut Français du P\'etrole, Rueil-Malmaison, 
   France. francoise.willien@ifp.fr}
  \setcounter{footnote}{6}
  \setcounter{footnote}{2}
  \and
  \and S.~Wolf\thanks{Laboratoire de Tectonique - CNRS UMR 7072, Universit\'e Pierre \& Marie Curie - Paris VI, 4 place Jussieu, 75252 Paris Cedex 05, France. sylvie.wolf@upmc.fr }
}

\allowdisplaybreaks[1]

\begin{document}

\maketitle

\tableofcontents

\begin{abstract}
We present a strategy for solving time-dependent problems on grids with local refinements in time using different time steps in different regions of space. We discuss and analyze two conservative approximations based on finite volume with piecewise constant projections and domain decomposition techniques. Next we present an iterative method for solving the composite-grid system that reduces to solution of standard problems with standard time stepping on the coarse and fine grids. At every step of the algorithm, conservativity is ensured. Finally, numerical results illustrate the accuracy of the proposed methods. 
\end{abstract}


\section{Introduction}
\label{sec:introduction}
In many physical applications, there are special features which greatly affect the solution globally
 as well as locally. One important example is the local spatial and temporal behavior of multiphase fluid flow
around a production well in the petroleum recovery applications. To capture this local behavior, spatial local
refinement is necessary. However, it requires a reduction of the time step, compared to the 
one used with a coarse mesh, in order to get a solution accurate enough in the refined zone and to avoid 
convergence problems when solving the non linear discretized equations. When applied uniformly on all the 
simulation domain, this reduced time step leads to unacceptable cpu-time making the use of local time steps  highly desirable. 
To be efficient, a  local time-stepping strategy (numerical scheme and solution method) must
\begin{itemize}
\item  ensure accuracy of the solution i.e. the solution has to be more accurate than the one obtained with 
a global coarse mesh,
\item ensure stability without any too restrictive condition on the time step,
\item  lead 
to reduced cpu-time compared to the one obtained when using a  small time step on the whole domain.
\end{itemize}
 In the framework of reservoir simulation where local grid refinement is  necessary to represent
 correctly important local phenomena in the wells vicinity, the corresponding numerical 
scheme must also be locally conservative in order to be applicable to multi-phase flow 
simulations where  a coupled system of parabolic and hyperbolic equations has to be solved.

For parabolic equations, different approaches have been proposed in the past which extend the classical
implicit finite difference scheme to local refinement in time. In \cite{ELV90}, the scheme is written as a
cell centered Finite Volume scheme. At the interface between the coarse time step zone and the refined time
step one, the flux over the coarse step is taken equal to the integral over the corresponding refined steps of the flux
computed from the refined zone. This refined flux approximation requires values of the unknowns in the coarse
zone at small time steps which are computed using  piecewise constant or linear interpolation from the coarse unknowns. The scheme
is conservative. Stability and error estimation are obtained for the piecewise constant interpolation. 
On the contrary, for the linear interpolation, stability is obtained under a sufficient condition which is as restrictive
as the time step limitation obtained for an explicit scheme. In \cite {DDD91},  Dawson et al  proposed to couple classical
 implicit finite difference
schemes in the refined and coarse zones using an explicit approximation at the interface on a larger
 mesh size  in order to attenuate the time step limitation due to the explicit approximation.
Although interesting for its simplicity, this approach can not be retained due to its time
step limitation.  In \cite{EL94}, Ewing and Lazarov  proposed an implicit non conservative approach.
 The scheme for the coarse nodes is straightforward  while the fine grid nodes located at the
 interface between the coarse and fine regions  require ``slave'' points at small time steps on
 the coarse grid side, which are not grid points.
As in \cite{ELV90}, the values of the unknown at these slave points are obtained by linear interpolation in time 
between the corresponding nodes of the coarse grid, and the set of discretized equations involves all the unknowns
between two coarse time levels.  Stability and error analysis are performed. The solution method uses
 an iterative method associated to 
 a coarse grid preconditionner of the Schur complement of the system where the refined region unknowns have 
been eliminated. 
In the more applied framework of compositional multiphase flow, \cite{DKH95} introduced
 an implicit timestepping method. For each global time step, the problem is solved implicitly
 in the whole domain but using a linear approximation of the model in the refined regions which avoids any
convergence problem of the non linear solver due to refined mesh. Then, the 
refined zones are solved using a local time step and taking  as boundary conditions the fluxes computed
 during the first stage at
the interface between the refined and coarse zones. This approach ensures that
the method is conservative. It is moreover rather efficient as, compared to the cpu-time necessary to
 solve the problem with a large timestep on the whole domain,  it only requires additional cpu-time to solve 
the equations once in the refined zone. However, the accuracy of the solution is not controlled. 
Looking for an  efficient solution method, \cite{SV2000} used the same finite difference scheme as \cite{EL94}
for linear parabolic equations but proposed a predictor-corrector method. In the predictor stage, the
 solution is computed at the coarse time step on all the domain and in the correction step, the solution
 is computed in the refined grid at small time steps using values at slave nodes interpolated from
the coarse nodes solution obtained in the first stage. They show that the predictor corrector approach preserves the
maximum principle satisfied by the solution of the scheme.

Our paper proposes a  local time step strategy based on the domain decomposition framework. It
 extends the approach introduced in  \cite{ELV90} by generalizing the interface conditions used to couple the coarse and refined time-step domains.  The method is conservative. Stability and
error estimates, which are different from that obtained in \cite{ELV90},  are presented.  A solution method which improves the 
predictor-corrector methods of  \cite{DKH95} and  \cite{SV2000} is proposed. 
In order to simplify the presentation and to concentrate on the difficulties arising from the local 
refinement in the time direction, we will first explain the approach in the case of a one-dimensional 
spatial problem in section \ref{section2}. Stability and error estimates are then proven in the more general case of nD spatial grids in sections 3 to 6. 
Finally, some numerical results are presented in section 7. 

\section{Description of the local time stepping strategy \label{section2}}
We consider the following problem:
Let $T>0$ and $\Omega$ be an open bounded domain of $\R^d$, $d \geqslant 1$, $p_0:\Omega\mapsto \R$ and $f:  \Omega \times (0,T)\mapsto \R$ be given functions.
Find $p : \Omega \times [0,T] \mapsto \R$ such that:
\begin{equation} \label{pb_cont}
\left.
\begin{aligned}
\dpdt{}(x,t) - \Delta p(x,t) = f(x,t) \quad \forall x \in \O \quad \forall t \in [0,T] \\
p(0,x) = p_0(x) \quad \forall x \in \O \\
p(x,t) = 0 \quad \forall x \in \partial \O \quad \forall t \in [0,T] 
\end{aligned}
\qquad\right\}
\end{equation}

In order to explain the scheme, we consider the $d=1$ case  with the cell centered grid shown in Figure \ref{figure:maillage1D}
and a time step which is variable in space. Namely, the domain $\O$ is decomposed into two non overlapping subdomains $\Oun$ and $\Odeux$ where  two different time-step sizes are used : the coarser time
 step is denoted $\dt_2$ (in $\Odeux$) and the finer time step is denoted $\dt_1$ (in $\Oun$) such that $\K \dt_1 = \dt_2$ with $\K \in \mathbb{N}^*$ (Figure  \ref{discretisation}).

\begin{figure}[hbt]
\begin{center}
\setlength{\unitlength}{0.00083333in}
\begingroup\makeatletter\ifx\SetFigFont\undefined%
\gdef\SetFigFont#1#2#3#4#5{%
  \reset@font\fontsize{#1}{#2pt}%
  \fontfamily{#3}\fontseries{#4}\fontshape{#5}%
  \selectfont}%
\fi\endgroup%
{\renewcommand{\dashlinestretch}{30}
\begin{picture}(6778,1060)(0,-10)
\put(148,549){\ellipse{70}{70}}
\put(414,549){\ellipse{70}{70}}
\put(668,556){\ellipse{70}{70}}
\put(921,549){\ellipse{70}{70}}
\put(1180,549){\ellipse{70}{70}}
\put(1460,549){\ellipse{70}{70}}
\put(1700,549){\ellipse{70}{70}}
\put(1960,549){\ellipse{70}{70}}
\put(2219,556){\ellipse{70}{70}}
\put(4146,540){\ellipse{70}{70}}
\put(5192,547){\ellipse{70}{70}}
\put(6247,548){\ellipse{70}{70}}
\put(2594,556){\ellipse{70}{70}}
\put(3247,548){\ellipse{70}{70}}
\path(798,710)(798,386)
\path(278,703)(278,374)
\path(538,703)(538,374)
\path(1317,703)(1317,374)
\path(1577,703)(1577,374)
\path(1832,703)(1832,374)
\path(2092,703)(2092,374)
\path(2356,703)(2356,374)
\thicklines
\path(853.930,844.950)(802.000,806.000)(853.930,767.050)
\path(802,806)(1062,806)
\path(1010.070,767.050)(1062.000,806.000)(1010.070,844.950)
\thinlines
\path(17,550)(6766,546)
\thicklines
\path(6752,805)(6752,286)
\path(22,807)(22,288)
\thinlines
\path(4673,694)(4673,365)
\path(3634,694)(3634,365)
\path(5712,694)(5712,365)
\path(1058,703)(1058,374)
\thicklines
\path(995.930,759.950)(944.000,721.000)(995.930,682.050)
\path(944,721)(1204,721)
\path(1152.070,682.050)(1204.000,721.000)(1152.070,759.950)
\path(2842,705)(2842,376)
\put(827,307){\makebox(0,0)[lb]{\smash{{{\SetFigFont{10}{12.0}{\rmdefault}{\mddefault}{\updefault}$x_j$}}}}}
\put(821,913){\makebox(0,0)[lb]{\smash{{{\SetFigFont{10}{12.0}{\rmdefault}{\mddefault}{\updefault}$h_j$}}}}}
\put(1143,793){\makebox(0,0)[lb]{\smash{{{\SetFigFont{10}{12.0}{\rmdefault}{\mddefault}{\updefault}$h_\jp12$}}}}}
\put(2747,772){\makebox(0,0)[lb]{\smash{{{\SetFigFont{10}{12.0}{\rmdefault}{\mddefault}{\updefault}$x_{I+\demi}$}}}}}
\put(2477,337){\makebox(0,0)[lb]{\smash{{{\SetFigFont{10}{12.0}{\rmdefault}{\mddefault}{\updefault}$x_I$}}}}}
\put(3117,346){\makebox(0,0)[lb]{\smash{{{\SetFigFont{10}{12.0}{\rmdefault}{\mddefault}{\updefault}$x_{I+1}$}}}}}
\put(967,68){\makebox(0,0)[lb]{\smash{{{\SetFigFont{12}{14.4}{\rmdefault}{\mddefault}{\updefault}$\Omega_1$}}}}}
\put(4367,58){\makebox(0,0)[lb]{\smash{{{\SetFigFont{12}{14.4}{\rmdefault}{\mddefault}{\updefault}$\Omega_2$}}}}}
\end{picture}
}
\caption{1D cell-centered grid }
    \label{figure:maillage1D}
\end{center}
\end{figure}

\begin{figure}[hbt]
\centering
\unitlength .5mm
\begin{picture}(160,150)

\matrixput(0,20)(1,0){1}(0,10){11}{\line(1,0){80}}
\matrixput(80,20)(1,0){1}(0,50){3}{\line(1,0){80}}

\matrixput(0,20)(10,0){7}(0,10){1}{\line(0,1){100}}
\matrixput(80,20)(20,0){5}(0,50){1}{\line(0,1){100}}

\put(160,20){\vector(1,0){10}}
\put(172,15){\makebox(10,10){x}}
\put(0,120){\vector(0,1){10}}
\put(-5,132){\makebox(10,10){t}}

\put(-20,20){\makebox(10,10){$t_{n-1,1}^1$}}
\put(-20,78){\makebox(10,10){$t_{n,k-\frac{1}{2}}^1$}}
\put(-20,88){\makebox(10,10){$t_{n,k+\frac{1}{2}}^1$}}
\put(165,65){\makebox(10,10){$t_{n-\frac{1}{2}}^2$}}
\put(165,115){\makebox(10,10){$t_{n+\frac{1}{2}}^2$}}
\put(165,40){\makebox(10,10){$t_{n-1}^2$}}
\put(163,90){\makebox(10,10){$t_n^2$}}

\put(-3,45){\vector(0,1){5}}
\put(-3,45){\vector(0,-1){5}}
\put(-15,40){\makebox(10,10){$\dt_1$}}
\put(185,45){\vector(0,1){25}}
\put(185,45){\vector(0,-1){25}}
\put(190,40){\makebox(10,10){$\dt_2$}}


\put(75,10){\makebox(10,10){$x_{I+\frac{1}{2}}$}}
\put(35,10){\makebox(10,10){$\O_1$}}
\put(115,10){\makebox(10,10){$\O_2$}}

\end{picture}

\caption{Time-space discretization.}
\label{discretisation}
\end{figure}
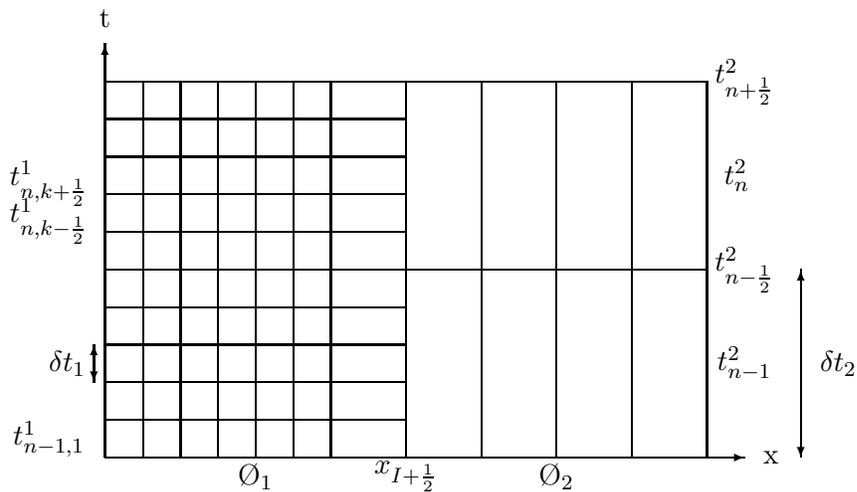

\subsection{Discretization}
In each subdomain, the equation is discretised using a classical cell centered finite volume implicit scheme:

\begin{subequations}
\label{eq:scheme1D}
\begin{align}
\frac{h_j}{\dt_1}(p^{n,k+1}_j- p^{n,k}_j)   - (u_{\jp12}^{n,k+1}-u_{\jm12}^{n,k+1}) = h_j f^{n,k+1}_j \mbox{ for } j \leq I  \label{eq:scheme1D_dom1}\\
\frac{h_j}{\dt_2} (p^{n+1}_j- p^{n}_j) - (u_{\jp12}^{n+1}-u_{\jm12}^{n+1}) = h_j f^{n+1}_j \mbox{ for } j > I  \label{eq:scheme1D_dom2}
\end{align}
\end{subequations}
where $p^{n}_j$ is an approximation of the unknown in the space-time cell $(x_\jm12,x_\jp12)\times (t_{n-\demi},t_{n+\demi})$, $p^{n,k}_j$ in the cell $(x_\jm12,x_\jp12)\times (t_{n,k-\demi},t_{n,k+\demi})$ (see Figures \ref{figure:maillage1D} and \ref{discretisation}), 
$$
f^{n+1}_j=\frac{1}{h_j\dt_1 }\int_{x_\jm12}^{x_\jp12}\int_{t_{n-\demi}}^{t_{n+\demi}} f(x,t)dxdt
$$
 and 
$$
f^{n,k+1}_j=\frac{1}{h_j\dt_2 }\int_{x_\jm12}^{x_\jp12}\int_{t_{n,k-\demi}}^{t_{n,k+\demi}} f(x,t)dxdt\,.
$$
  Except for the boundary nodes (which are handled classically and not precised here) and interface node $x_{I+\demi}$, the flux approximation $u_{\jp12}^s$   is given by:
\begin{align}
u_{\jp12}^s = \frac{p_{j+1}^s-p_{j}^s}{h_\jp12} \mbox{ for  $s=n$ if  $j>I$ or $s=n,k$ if $j<I$ }\label{eq:flux_intern}
\end{align} 
For the approximation of the fluxes on the interface $u_{I+\demi}^{n+1}$ and $u_{I+\demi}^{n,k}$, we 
consider the space-time domain decomposition framework and introduce $p_{I+\demi}^{n+1}$ and $p_{I+\demi}^{n,k}$
the unknown approximations on the interface ${x_{I+\demi}}$. The flux approximations are classically 
obtained as 
\begin{eqnarray}
 u_{I+\demi}^{n,k} = \frac{p^{n,k}_{I+\demi}-p^{n,k}_{I}}{0.5h_I} \label{eq:flux1D1}\\
 u_{I+\demi}^{n+1} = \frac{p^{n+1}_{I+1}-p^{n+1}_{I+\demi}}{0.5h_{I+1}} \label{eq:flux1D2}
\end{eqnarray}
Discretizations in the two domains are linked by interface conditions which enforce flux and unknown continuity on the interface ${x_{I+\demi}}\times(t_{n-\demi},t_{n+\demi})$. We consider the following two possibilities:

\begin{subequations}
\label{eq:master2}
\begin{align}
\dt_2  u_{I+\demi}^{n+1} = \sum_{k=1}^{\K} \dt_1 u_{I+\demi}^{n,k} \label{eq:master2:flux}\\
 p_{I+\demi}^{n,k} =  p_{I+\demi}^{n+1} \quad k=1\cdots\K \label{eq:master2:pres}
\end{align}
\end{subequations}
or: 
\begin{subequations}
\label{eq:master1}
\begin{align}
 u_{I+\demi}^{n,k} =  u_{I+\demi}^{n+1} \quad k=1\cdots\K \label{eq:master1:flux} \\
\dt_2  p_{I+\demi}^{n+1} = \sum_{k=1}^{\K} \dt_1 p_{I+\demi}^{n,k}   \label{eq:master1:pres}
\end{align}
\end{subequations}
These interface conditions can be rewritten in terms of the L2 orthogonal projections on sets of piecewise constant functions in time (see section \ref{sec:fvontheinterface}). Both sets of conditions ensure local conservation for the coarse time step.
 
Another way to couple the fine and coarse grid which is somewhat more natural, is to introduce the unknown approximations  $p_{I+\demi}^{n+1}$ and $p_{I+\demi}^{n,k}$ not on the interface but rather in the neighbouring cells $p_{I}^{n+1}$ and $p_{I+1}^{n,k}$. The flux approximations on the interface are then directly expressed as for an interior edge and instead of (\ref{eq:flux1D1}),(\ref{eq:flux1D2}), we have:
\begin{eqnarray}
 u_{I+\demi}^{n,k} = \frac{p^{n,k}_{I+1}-p^{n,k}_{I}}{h_{I+\demi}} \label{eq:flux1D1_overlap}\\
 u_{I+\demi}^{n+1} = \frac{p^{n+1}_{I+1}-p^{n+1}_{I}}{h_{I+\demi}} \label{eq:flux1D2_overlap}
\end{eqnarray}
and the  interface conditions become:
\begin{subequations}
\label{eq:master2_overlap}
\begin{align}
\dt_2  u_{I+\demi}^{n+1} = \sum_{k=1}^{\K} \dt_1 u_{I+\demi}^{n,k} \label{eq:master2:flux_overlap}\\
 p_{I+1}^{n,k} =  p_{I+1}^{n+1} \quad k=1\cdots\K \label{eq:master2:pres_overlap}
\end{align}
\end{subequations}
or: 
\begin{subequations}
\label{eq:master1_overlap}
\begin{align}
 u_{I+\demi}^{n,k} =  u_{I+\demi}^{n+1} \quad k=1\cdots\K \label{eq:master1:flux_overlap} \\
\dt_2  p_{I}^{n+1} = \sum_{k=1}^{\K} \dt_1 p_{I}^{n,k}   \label{eq:master1:pres_overlap}
\end{align}
\end{subequations}
We have thus four possible coupling schemes: (\ref{eq:flux1D1}),(\ref{eq:flux1D2}),(\ref{eq:master2})--(\ref{eq:flux1D1}),(\ref{eq:flux1D2}),(\ref{eq:master1})--(\ref{eq:flux1D1_overlap}),(\ref{eq:flux1D2_overlap}),(\ref{eq:master2_overlap})--(\ref{eq:flux1D1_overlap}),(\ref{eq:flux1D2_overlap}),(\ref{eq:master1_overlap}) which are analysed in the sequel.
Equations (\ref{eq:flux1D1_overlap}),(\ref{eq:flux1D2_overlap}),(\ref{eq:master2_overlap}) are the approximations proposed in  \cite{ELV90}.

\subsection{Solution method \label{section:SolutionMethod}}
To solve the system of  algebraic equations for the unknowns values of the approximate solution between two coarse time levels, which includes all intermediate local time levels, we propose a method which combines the attractive feature of predictor-corrector approaches with the accuracy of domain decomposition type iterative algorithms.
The method includes a predictor stage which corresponds to the computation of the solution on the coarse grid time and an iterative corrector stage  where refined and coarse grids unknowns are solved alternatively,  until interface conditions are satisfied, using a Schwarz multiplicative Dirichlet Neumann algorithm \cite{quarteroni}.
If we consider (\ref{eq:flux1D1_overlap}),(\ref{eq:flux1D2_overlap}),(\ref{eq:master2_overlap}) interface conditions, the algorithm is:
\begin{itemize}
\item Predictor stage: computation of an approximate solution at coarse time step on the whole grid:  $\tilde{p}^{n+1}_j$ for all j 
\begin{equation} 
\frac{h_j}{\dt_2} (\tilde{p}^{n+1}_j- p^{n}_j) - (\tilde{u}_{\jp12}^{n+1}-\tilde{u}_{\jm12}^{n+1}) = h_j f^{n+1}_j  \mbox{ for all } j \label{eq:predictor}  
\end{equation}
\item Corrector iterative stage if  interface conditions (\ref{eq:master2_overlap}) are used: \\
Solve alternatively the equations in domain $\Oun$ using (\ref{eq:master2:pres_overlap}) interface condition and in domain $\Odeux$ using (\ref{eq:master2:flux_overlap}), until both interface conditions are satisfied simultaneously.
\end{itemize}
If   (\ref{eq:flux1D1_overlap}),(\ref{eq:flux1D2_overlap}),(\ref{eq:master1_overlap}) interface conditions are used, the corrector iterative stage is :
\begin{itemize} 
\item Corrector iterative stage: \\
Solve alternatively the equations in domain $\Oun$ using (\ref{eq:master1:flux_overlap}) interface condition and in domain $\Odeux$ using (\ref{eq:master1:pres_overlap}),  until both interface conditions are satisfied simultaneously.
\end{itemize}
These algorithms can also be written for (\ref{eq:flux1D1}),(\ref{eq:flux1D2}),(\ref{eq:master2}) or (\ref{eq:flux1D1}),(\ref{eq:flux1D2}),(\ref{eq:master1}) interface conditions.
We can notice that it is not necessary to iterate  the corrector stage until convergence to obtain a conservative solution. It is sufficient to stop the process after the resolution of the domain where Neumann interface conditions are imposed. The algorithm proposed in \cite{DKH95} consists in the predictor stage (\ref{eq:predictor}) and in the first iteration of the corrective stage with (\ref{eq:master1:flux_overlap}) interface condition.
This solution method is not limited to linear discrete equations and can be extended to the resolution of the non linear equations that arise in petroleum recovery applications. Following the idea of \cite{DKH95}, the predictor stage would then use a linear approximation of the problem in the refined domain in order to avoid convergence problems of the Newton algorithm, while the iterative corrector stage would consider the non linear problem.


\section{Finite volume discretization}
\label{sec:continuouslevel}

Problem \eqref{pb_cont} is rewritten as a domain decomposition problem. The domain $\Omega$ is decomposed into two non overlapping subdomains $\Omega_1$ and $\Omega_2$ ($\bar{\Omega}_1\cup\bar{\Omega}_2=\bar{\Omega}$ and $\Omega_1\cap\Omega_2=\emptyset$). The interface is denoted by $\Gamma = \bar{\Omega}_1 \cap \bar{\Omega}_2$. Problem \eqref{pb_cont} is equivalent to: 

Find $p^1 : \Omega_1 \times [0,T] \mapsto \R$ and $p^2 : \Omega_2 \times [0,T] \mapsto \R$ such that:
\begin{subequations}
\label{eq:pb_cont_ddm}
\begin{eqnarray}
\dpdt{i}(x,t) - \Delta p^i(x,t) = f(x,t) \quad \forall x \in \Omega_i \quad \forall t \in [0,T] \quad \forall i \in \{1,2\} \label{eq_cont} \\
p^i(x,0) = p_0(x) \quad \forall x \in \Omega_i \quad \forall i \in \{1,2\} \\
p^i(x,t) = 0 \quad \forall x \in \partial \Omega_i \cap \partial \Omega \quad \forall t \in [0,T] \quad \forall i \in \{1,2\} \\
\dpdn{1}(x,t) + \dpdn{2}(x,t) = 0 \quad \forall x \in \Gamma \quad \forall t \in [0,T] \label{trans_cont} \\
p^1(x,t) = p^2(x,t) \quad \forall x \in \Gamma \quad \forall t \in [0,T] 
\end{eqnarray}
\end{subequations}
where $n_i$ is the outward normal to domain $\Omega_i$, $i=1,2$.

\label{sec:discretisation}
Problem \eqref{eq:pb_cont_ddm}  is discretized using a cell centered 
finite volume scheme in each subdomain \cite{EGH00}. We choose this scheme as an example
but other schemes would be possible as well.

\subsection{Mesh and definition}
\label{sec:mesh}
For $i =1,2$, let $ {\mathcal{T}}_i $ be a set of closed polygonal subsets associated with $ \Omega_i $ such that $\bar\Omega_i=\cup_{K \in \mathcal{T}_i}K$. We shall denote $h=\max_{i\in \{1,2\},K \in \mathcal{T}_i } diam(K)$ its mesh size. We shall use the following notation for all $ i =1,2$.
\begin{itemize}
\item $\mathcal{E}_{\Omega_i}$ is the set of faces of $\mathcal{T}_i$.
\item $\mathcal{E}_{iD}$ is the set of faces such that
$\partial\Omega_i \cap\partial\Omega= \cup_{\epsilon
\in \mathcal{E}_{iD}} \epsilon$ (let us recall that
a Dirichlet boundary condition will be imposed on $\partial\Omega_i
\cap \partial\Omega$). 
\item $\mathcal{E}_{i}$ is the set of faces such that
$\partial\Omega_i {\setminus}\partial\Omega= \cup_{\epsilon
\in \mathcal{E}_{i}} \epsilon$. Grids are matching on the interface so that 
$$
\mathcal{E}:=\mathcal{E}_{1}=\mathcal{E}_{2}\,.
$$
\item $\forall K \in \mathcal{T}_i$,
\subitem $\mathcal{E}(K)$ denotes the set of faces of $K$.
\subitem $\mathcal{E}_{iD}(K)=\mathcal{E}(K)\cap\mathcal{E}_{iD}$
is the set of faces of $K$ which are
on $\partial\Omega_i \cap \partial\Omega$.
\subitem $\mathcal{E}_{i}(K)=\mathcal{E}(K)\cap\mathcal{E}_{i}$
is the set of faces of $K$ which are
on $\partial\Omega_i {\setminus} \partial\Omega$. 
\subitem $\mathcal{N}_i(K)=\{ K' \in \mathcal{T}_i| K\cap K'
\in \mathcal{E}_{\Omega_i} \}$ is the set of the control
cells adjacent to $K$ in $\Omega_i$.
\subitem $K_i(\eps)$ denotes the cell of $\T_i$ adjacent to $\eps\in\E_i\cup\mathcal{E}_{iD}$.
\item the time step in subdomain $\O_i$ is denoted by $\dt_i$, and $N_i$ denotes the number of time steps of the simulation so that $N_1\dt_1=N_2\dt_2=T$. Parameters $\dt_i$, $N_i$ satisfy
\begin{equation}
\frac{N_1}{N_2} = \frac{\dt_2}{\dt_1} = \K \in \mathbb{N}^*
\end{equation}
\item Let $[0,T]_{\dt_i}$ denote the discretization of the time interval $[0,T]$ in each subdomain $\Omega_i$ : $[0,T]_{\dt_i}=(t_n^i)_{n=1,\ldots,N_i}$, with $t_n^i = (n-\demi) \dt_i$ ; since the time discretization in $\Omega_1$ is a refinement of that in $\Omega_2$, we shall also write  : $t_{n,k}^1 = (n-1) \dt_2 + (k-\demi) \dt_1$, $n=1,\ldots,N_2$, $k=1,\ldots,\K$
\end{itemize} 
 We make the following geometrical assumptions on the global mesh : $\mathcal{T} = \mathcal{T}_1 \cup \mathcal{T}_2$
\begin{assumption}\label{hyp_maillage}
\rm $\mathcal{T}$ is a finite volume admissible mesh, i.e., $\mathcal{T}$ is a set of closed subsets of dimension $d$ such that
\begin{itemize}
\item 
for any
$(K,K')\in \mathcal{T}^2 $ with $ K \neq K'$, one has either $[K\,K']:=K\cap K' \in \mathcal{E}_{\Omega_1} \cup \mathcal{E}_{\Omega_2}$ or ${\rm dim}(K\cap K' ) < d-1$
\item for $i=1,2$, there exist points $(y_\epsilon)_{\eps \in \mathcal{E}_{\Omega_i}}$ on the faces  and points $(x_K)_{K \in \mathcal{T}_i}$ inside the cells such that (see figure~\ref{figure:geomassump})
\begin{itemize} 
\item for any adjacent cells $K$ and $K'$, the straight line $[ x_K , x_{K'} ]$ is perpendicular to the face $[K\, K']$ and $[ x_K , x_{K'} ] \cap [K\, K']  =\{ y_\epsilon \}$
\item for any face $\epsilon \in \mathcal{E}_{iD}$, let $K(\epsilon)\in \toi$ be such that $\epsilon\subset K$: then the straight line  $[ x_{K(\epsilon)} , y_{\eps} ]$ is
perpendicular to $\epsilon$ 
\end{itemize}
\item Each mesh $\mathcal{T}_i$, $i=1,2$ has at least one interior cell. 
\end{itemize}
\end{assumption}
\begin{figure}
\begin{center}
\includegraphics[height=3cm]{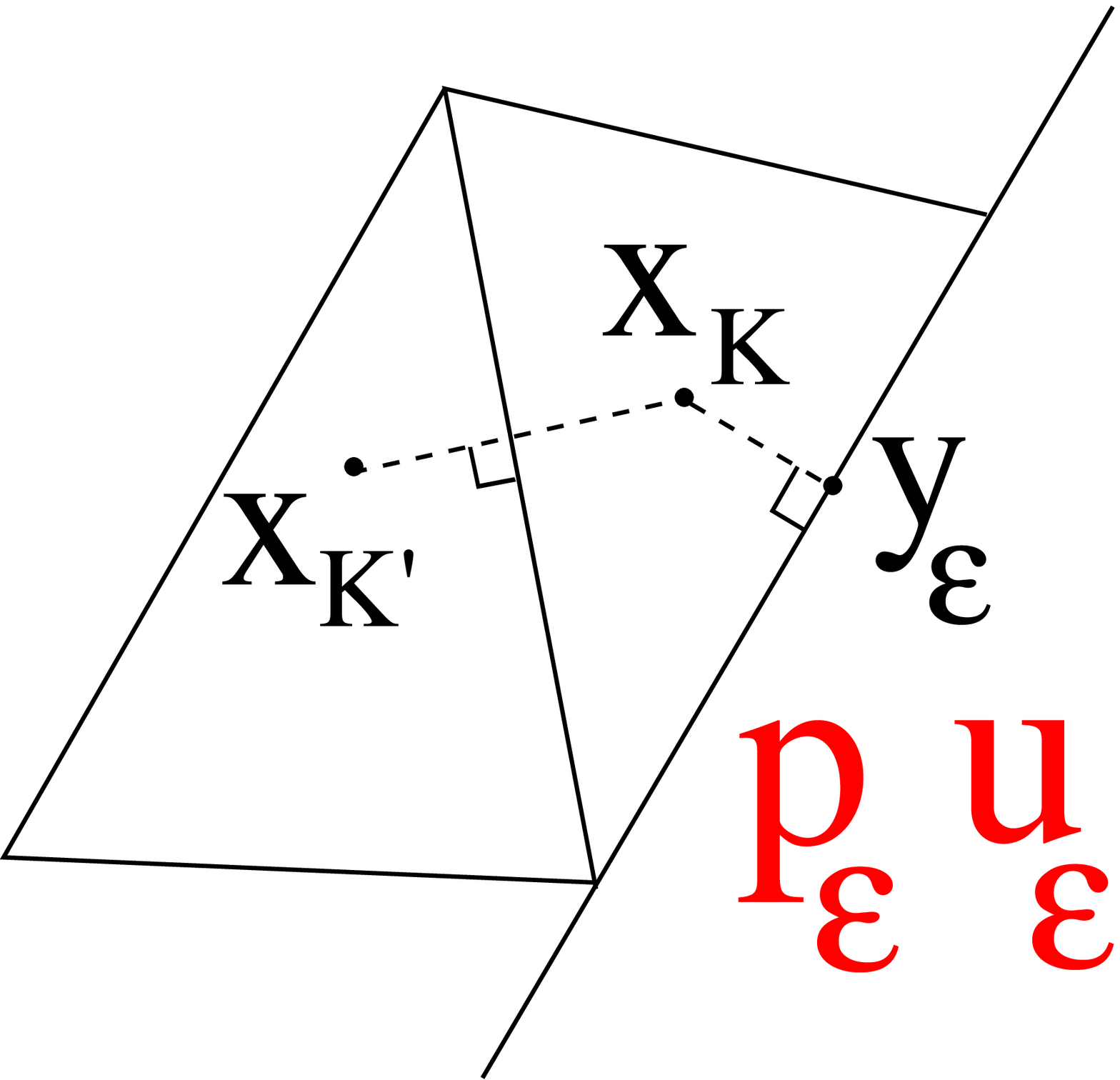}
\caption{Assumption \ref{hyp_maillage}}
    \label{figure:geomassump}
\end{center}
\end{figure}
\begin{notation}
 For all $\epsilon\in \mathcal{E}_{i}\cup\mathcal{E}_{iD}$, $i=1,2$, $d_\epsilon^i$ denotes the distance between $x_{K_i(\epsilon)}$ and $y_\epsilon$.   
\end{notation}

\subsection{Cell centered finite volume scheme in the subdomains}
\label{sec:fvsubdomains}
The unknowns of the scheme and what they aim to approximate are ($i=1,2$):
\begin{align*}
p_K^{i,n} & \simeq p^i(x_K,t_n),\ K\in\T_i \\
p_\eps^{i,n} & \simeq p^i(y_\eps,t_n),\  \eps\in\E_i\cup\E_{iD}\\
u_\eps^{i,n} & \simeq \dpdn{i}(y_\eps,t_n),\ \eps\in\E_i\cup\E_{iD}
\end{align*}
The numerical flux is defined by:
\begin{equation} \label{rel_p_u}
u_\eps^{i,n} = \frac{p_\eps^{i,n}-p_{K_i(\eps)}^{i,n}}{d_\eps^i} \quad \forall \eps \in \E_i \cup \E_{iD} \quad \forall n \in \{0..N_i-1\} \quad \forall i \in \{1,2\}
\end{equation}
The scheme is defined by (see e.g. \cite{EGH00} for its derivation):
\begin{multline} \label{eq_schema}
\frac{p_K^{i,n+1}-p_K^{i,n}}{\dt_i} \meas(K) - \sumKp{i} \frac{p_{K'}^{i,n+1}-p_K^{i,n+1}}{d(x_K,x_{K'})} \meas([KK']) \\
- \sumEDE{i} u_\eps^{i,n+1} \meas(\eps) = f_K^{i,n+1} \meas(K) \quad \forall K \in \T_i \quad \forall n \in 0..N_i-1
\end{multline}
where $d(x_K,x_{K'})$ is the distance between $x_K$ and $x_{K'}$ and $f_K^{i,n}$ is an approximation to $\frac{1}{\dt_i} \int_{t_{n-1/2}^i}^{t_{n+1/2}^i} \frac{1}{\meas(K)} \int_K f$. The initial and boundary conditions are discretized by:
\begin{eqnarray}
p_K^{i,0} = p_0(x_K) \quad \forall K \in \T_i \quad \forall i \in \{1,2\} \\
p_\eps^{i,n+1} = 0 \quad \forall \eps \in \E_{iD} \quad \forall n \in 0..N_i-1 \quad \forall i \in \{1,2\}
\end{eqnarray}
When there is no domain decomposition, this scheme has been analyzed in \cite{EGH00} in the more general case of discontinuous coefficients, and it is proven to be of order 1 for a discrete $H^1$-norm. 

In order to define the domain decomposition discretization scheme, we shall define in section~\ref{sec:fvontheinterface} the matching conditions for the diffusive fluxes.

{\bf Discrete spaces}\\
\begin{itemize}
\item 
\begin{equation}
  \begin{array}{l}
P_0(\T_i \times [0,T]_{\dt_i}) = \left\{ p : \bar\Omega_i\times [0,T]\mapsto \R \backslash\  \forall n \in \{0,\ldots,N_i-1\}\right. \\
\hfill  \forall K \in \T_i \ \  p_{|K \times (t_{n-1/2}^i,t_{n+1/2}^i)} \equiv C^{t}\\
\hfill \left. \mbox{ and } \forall \epsilon\in\E_i\cup\E_{iD}\ \,,  \ p_{|\epsilon \times (t_{n-1/2}^i,t_{n+1/2}^i)} \equiv C^{t}\right\}
  \end{array}
\end{equation}
\item Similarly, $P_0(\mathcal{E} \times [0,T]_{\dt_i}) $ is the space of piecewise constant functions on the interface for the time mesh of subdomain $\Omega_i$.
\item $P_0([0,T]_{\dt_i}) = \left\{ p : [0,T]\mapsto \R\backslash\ \ \forall n \in \{0,\ldots,N_i-1\}\ p_{|(t_{n-1/2}^i,t_{n+1/2}^i)} \equiv C^{t} \  \right\}$
\end{itemize}
These are spaces of piecewise constant functions. 

Let $p^i\in P_0(\T_i \times [0,T]_{\dt_i})$, we denote its restriction 
\begin{itemize}
\item to $\bar\Omega_i\times \{t^i_n \}$ by $p^{i,n}$, for $n\in \{0,\ldots,N_i-1\}$
\item to $\epsilon\times [0,T]_{\dt_i}$ by $p^i_\epsilon$ for $\epsilon\in\E$
\item to $\E\times [0,T]_{\dt_i}$ by $p^i_\E$ 
\end{itemize}
We introduce the following norms and semi-norms for any $p^i\in P_0(\T_i \times [0,T]_{\dt_i})$:
\begin{gather*}
\|p^{i,n}\|_{L^2(\O_i)}^2 = \sum_{K\in\T_i} (p_K^{i,n})^2 \meas(K) \\
\|p^i\|_{L^2(0,T;L^2(\O_i))}^2 = \sum_{n=0}^{N_i} \dt_i \|p^{i,n}\|_{L^2(\O_i)}^2 
\end{gather*}
and
\begin{equation}
  \label{eq:seminomrH1}
  \begin{array}{l}
\ds |p^{i,n}|_{1,\T_i}^2 = \sum_{K\in\T_i} \left( 
\ds\sumKp{i} \frac{(p_{K'}^{i,n}-p_K^{i,n})^2}{d(x_K,x_{K'})} \meas([KK'])\right.\hfill\\
\ds\hfill\left.+ \sum_{\eps\in \E_i(K)} \frac{(p_\eps^{i,n}-p_K^{i,n})^2}{d(x_K,y_\eps)} \meas(\eps) + \sum_{\eps\in \E_{iD}(K)} \frac{(p_K^{i,n})^2}{d(x_K,y_\eps)} \meas(\eps) 
\right)   
  \end{array}
\end{equation}
and
\begin{equation}
  \label{eq:seminomrL2TH1}
|p^i|_{1,\T_i,\dt_i}^2 = \sum_{n=0}^{N_i} \dt_i |p^{i,n}|_{1,\T_i}^2
\end{equation}
\begin{definition}
Let $p^i,u^i \in P_0(\toi\times [0,T]_{\dt_i})$, $i=1,2$, we define a discrete scalar product by:
\[
\sumi \lprod u^i,p^i \rprod := 
   \sumi \sumn{i} \dt_i \sumeps u_\eps^{i,n+1} p_\eps^{i,n+1} \meas(\eps)
\]
\end{definition}

\begin{notation}
Let $i=1,2$, $p^i\in  P_0(\T_i \times [0,T]_{\dt_i})$, for $\epsilon\in\E$, $u^i_\epsilon(p)$ denotes the associated numerical flux defined by \eqref{rel_p_u}. Very often, we will simply write $u^i_\epsilon$ and $u^i_\E=(u_\eps(p^i))_{\eps\in\E_i}$.   
\end{notation}

\subsection{Finite volume on the interface}
\label{sec:fvontheinterface}

In order to enforce the weak continuity of the primary unknown $p$ and of its normal derivative (denoted by $u$) across the interface $\Gamma\times [0,T]$, we introduce $\mathcal{Q}_i$ the $L^2$ orthogonal projection onto $P_0([0,T]_{\dt_i})$. We have the following compatibility condition:
\begin{lemma}\label{hyp_transposition}
\rm For all  $u_i \in P_0([0,T]_{\dt_i})$, $i=1,2$,
\[
\langle \mathcal{Q}_1(u_2),u_1\rangle_{L^2( [0,T])}=\langle u_2,\mathcal{Q}_2(u_1)\rangle_{L^2([0,T])}
\]
\end{lemma}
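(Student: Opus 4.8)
The plan is to reduce both sides to the plain $L^2([0,T])$ inner product $\langle u_2, u_1\rangle_{L^2([0,T])}$, using only that the fine time grid refines the coarse one together with the two textbook properties of an $L^2$-orthogonal projection: self-adjointness and the fact that it acts as the identity on its own range.

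First I would record the inclusion $P_0([0,T]_{\dt_2}) \subset P_0([0,T]_{\dt_1})$. This holds because the time discretization in $\Omega_1$ is by construction a refinement of that in $\Omega_2$: each coarse cell of length $\dt_2$ is the disjoint union of $\K$ fine cells of length $\dt_1$, as recorded by $t_{n,k}^1 = (n-1)\dt_2 + (k-\demi)\dt_1$ together with $\K\dt_1 = \dt_2$, so a function that is constant on every coarse cell is automatically constant on every fine cell. Consequently $u_2 \in P_0([0,T]_{\dt_2})$ lies in the range of $\mathcal{Q}_1$, whence $\mathcal{Q}_1(u_2) = u_2$ and the left-hand side becomes $\langle u_2, u_1\rangle_{L^2([0,T])}$.

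For the right-hand side I would invoke the self-adjointness of the orthogonal projection $\mathcal{Q}_2$, which gives
\[
\langle u_2, \mathcal{Q}_2(u_1)\rangle_{L^2([0,T])} = \langle \mathcal{Q}_2(u_2), u_1\rangle_{L^2([0,T])}.
\]
Since $u_2$ already belongs to $P_0([0,T]_{\dt_2})$, the range of $\mathcal{Q}_2$, we again have $\mathcal{Q}_2(u_2) = u_2$, so the right-hand side also equals $\langle u_2, u_1\rangle_{L^2([0,T])}$. Comparing the two reductions closes the argument.

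There is no genuinely hard step: the only point that has to be verified is the nesting $P_0([0,T]_{\dt_2}) \subset P_0([0,T]_{\dt_1})$, after which the claim is immediate from the definition of an orthogonal projection. Should one prefer to bypass the abstract argument, the identity can be checked by direct computation. Writing $u_1^{n,k}$ for the value of $u_1$ on the fine cell $(n,k)$ and $u_2^n$ for the value of $u_2$ on the coarse cell $n$, the projection $\mathcal{Q}_2(u_1)$ is the piecewise constant function whose value on the coarse cell $n$ is the arithmetic mean $\frac{1}{\K}\sum_{k=1}^{\K} u_1^{n,k}$ (the average being unweighted because all fine cells share the common length $\dt_1$). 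Expanding both inner products over the fine mesh then shows that each side equals
\[
\sum_n \dt_2\, u_2^n \left( \frac{1}{\K}\sum_{k=1}^{\K} u_1^{n,k} \right),
\]
which exhibits concretely why this compatibility is exactly what is needed to transpose the interface coupling conditions between the two time grids.
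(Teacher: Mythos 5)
Your proof is correct: the nesting $P_0([0,T]_{\dt_2})\subset P_0([0,T]_{\dt_1})$ does hold because $\dt_2=\K\dt_1$ makes each coarse cell a disjoint union of fine cells, so both sides indeed collapse to $\langle u_2,u_1\rangle_{L^2([0,T])}$ by idempotence and self-adjointness of the orthogonal projections. The paper states this lemma without any proof, treating it as an immediate compatibility property, so there is nothing to diverge from; your abstract argument is the standard one, and your closing explicit computation is exactly the content of the paper's later formula \eqref{schema_constant} for $\Q_1$ and $\Q_2$.
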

As in mortar methods \cite{Bernardi:1994:NNA}, we consider that one subdomain enforces the weak continuity of the primary unknown which is interpreted as the Dirichlet interface condition. This subdomain is called the master. The other subdomain is called the slave. It enforces the weak continuity of the normal derivative which corresponds to a Neumann interface condition. 

Since here the interfaces are non matching only in the time direction, it is possible to define matching conditions locally on each interface face $\eps\in\E$.\\

{\bf Interface Scheme \eqref{cond_trans} based on interface unknowns}
The subscript $m$ will denote the master subdomain and $e$ the slave ($\{m,e\} = \{1,2\}$), the interface conditions on $\Gamma\times [0,T]$ read:
\begin{equation}
\tag{IS$_1$}
\label{cond_trans}
\left\{
\begin{aligned}
u_{\eps}^m =& \mathcal{Q}_m(-u_{\eps}^e) \\
p_{\eps}^e =& \mathcal{Q}_e(p_{\eps}^m)
\end{aligned}
\right.
\quad \forall \eps \in \E
\end{equation}

{\bf Overlapping interface scheme \eqref{cond_trans_rec}}
The Dirichlet boundary condition is modified but not the Neumann one:
\begin{equation}
\tag{IS$_2$} 
\label{cond_trans_rec}
\left\{
\begin{aligned}
u_{\eps}^m =& \mathcal{Q}_m(-u_{\eps}^e) \\
p_{\eps}^e + d_\eps^m u_{\eps}^e =& \Q_e(p_{\eps}^m - d_\eps^m u_{\eps}^m)
\end{aligned}
\right.
\quad \forall \eps \in \E
\end{equation}
The modified Dirichlet interface condition comes from the following relations:
\begin{equation*}
\begin{split}
u_{\eps}^e =& \frac{\Q_e(p_{K_m(\eps)}^m) - p_{K_e(\eps)}^e}{d_\eps^m+d_\eps^e} \\
=& \frac{\Q_e(p_{\eps}^m - d_\eps^m u_{\eps}^m) - p_{\eps}^e + d_\eps^e u_{\eps}^e}{d_\eps^m+d_\eps^e}
\end{split}
\end{equation*}
where $d_\eps^i = d(x_{K_i(\eps)},y_\eps)$. The first line of the above formula is somewhat natural. When writing the finite volume scheme for a cell $K_e(\epsilon)$ adjacent to the interface in the ``slave'' subdomain, it is necessary to approximate the flux on the face $\epsilon$. This is done using pressure values from both sides of the interface: the pressure in the ``slave'' subdomain and pressures values in the neighboring ``master'' subdomain. These last values are made compatible with the ``slave'' unknowns by using the transmission operator $\mathcal{Q}_e$. Finally, all quantities are expressed in terms of interface values in order to ease a comparison with \eqref{cond_trans}. 

Due to the fact that the large time step $\delta t_2$ is a multiple of the small time step $\delta t_1$, we have a simple form for the $L^2$ projection operators.
\begin{lemma}
We have $\mathcal{Q}_2 : P_0( [0,T]_{\dt_1}) \mapsto P_0( [0,T]_{\dt_2})$ and $\mathcal{Q}_1 : P_0( [0,T]_{\dt_2}) \mapsto P_0([0,T]_{\dt_1})$ 
\begin{equation} \label{schema_constant}
\left\{
\begin{aligned}
\mbox{For }v_2\in P_0( [0,T]_{\dt_2}),\ 
\Q_1(v_2)_{|(t^1_{n-1/2},t^1_{n+1/2})} = v_{2|(t^1_{n-1/2},t^1_{n+1/2})} \quad \forall\, n \in \{0,\ldots,N_1-1\} \\
\mbox{For }v_1\in P_0( [0,T]_{\dt_1}),\ 
\Q_2(v_1))_{|(t^2_{n-1/2},t^2_{n+1/2})} = \frac{1}{\dt_2} \int_{t^2_{n-1/2}}^{t^2_{n+1/2}} v_1 \quad \forall\, n \in \{0,\ldots,N_2-1\}
\end{aligned}
\right.
\end{equation}
\end{lemma}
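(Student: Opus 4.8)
The plan is to exploit the single structural fact that, because $\dt_2 = \K \dt_1$ with $\K \in \mathbb{N}^*$, the fine time mesh $[0,T]_{\dt_1}$ is an exact refinement of the coarse one $[0,T]_{\dt_2}$. Concretely, from $t_{n,k}^1 = (n-1)\dt_2 + (k-\demi)\dt_1$ one checks that each coarse cell $(t^2_{n-1/2},t^2_{n+1/2})$, which has length $\dt_2 = \K\dt_1$, is the disjoint union of exactly $\K$ fine cells as $k$ runs over $1,\ldots,\K$. Consequently $P_0([0,T]_{\dt_2}) \subset P_0([0,T]_{\dt_1})$: a function that is constant on every coarse cell is constant on every fine cell. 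I would state and verify this inclusion first, since both formulas follow from it.

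For $\mathcal{Q}_1$ the formula reduces to an idempotence argument. Given $v_2 \in P_0([0,T]_{\dt_2})$, the inclusion above places $v_2$ already inside the target space $P_0([0,T]_{\dt_1})$ of the projection $\mathcal{Q}_1$. Since an $L^2$ orthogonal projection restricts to the identity on its own image, $\mathcal{Q}_1(v_2) = v_2$, which is exactly the first line of \eqref{schema_constant}: on each fine cell $(t^1_{n-1/2},t^1_{n+1/2})$ the projection coincides with $v_2$.

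For $\mathcal{Q}_2$ I would use the variational characterization: $\mathcal{Q}_2(v_1)$ is the unique element of $P_0([0,T]_{\dt_2})$ satisfying $\langle v_1 - \mathcal{Q}_2(v_1), w\rangle_{L^2([0,T])} = 0$ for all $w \in P_0([0,T]_{\dt_2})$. The indicator functions of the coarse cells form a basis of $P_0([0,T]_{\dt_2})$ with pairwise disjoint supports, so testing this orthogonality condition against each of them decouples it cell by cell. On a coarse cell the condition reads $\int_{t^2_{n-1/2}}^{t^2_{n+1/2}} (v_1 - c_n)\,dt = 0$, where $c_n$ is the constant value of $\mathcal{Q}_2(v_1)$ there, which forces $c_n = \frac{1}{\dt_2}\int_{t^2_{n-1/2}}^{t^2_{n+1/2}} v_1$. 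This is precisely the second line of \eqref{schema_constant}.

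I do not expect a genuine obstacle, as both identities are standard properties of $L^2$ projections onto piecewise constant spaces. The only point requiring care is the very first step, namely rigorously confirming the exact refinement from the definitions of $t_n^i$ and $t_{n,k}^1$: it is this nesting that simultaneously makes $v_2$ lie in the fine space, giving $\mathcal{Q}_1 = \mathrm{id}$ on coarse functions, and makes the coarse-cell average in $\mathcal{Q}_2$ an integral over a whole number $\K$ of fine cells. One should also note that the averaging formula for $\mathcal{Q}_2$ is valid for any $v_1$, but it suffices here to record it on $P_0([0,T]_{\dt_1})$.
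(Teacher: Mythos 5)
Your proof is correct. The paper in fact states this lemma without any proof, treating it as an immediate consequence of the fact that $\dt_2=\K\dt_1$; your argument --- the nesting of the grids gives $P_0([0,T]_{\dt_2})\subset P_0([0,T]_{\dt_1})$, hence $\Q_1$ acts as the identity on coarse functions, while the variational characterization of the $L^2$ projection tested against indicator functions of the coarse cells yields the cell-wise average formula for $\Q_2$ --- is exactly the standard justification the paper implicitly relies on, so there is nothing to add or correct.
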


We also need the following technical assumptions. For the family of meshes we consider, the mesh close to the interface is not too stretched:

\begin{assumption} \label{hyp_alpha}
There exists a constant $\alpha > 0$ such that $d_\eps^m \leqslant \alpha d_\eps^e$ for all $\eps \in \E$.
\end{assumption}
\noindent We also need a geometric assumption:
\begin{assumption} \label{hyp_yepsilon}
For all $\eps \in \E$, $y_\eps$ is the  barycenter of the face $\eps$ and for $i=1,2$, 
\[
\frac{diam(\eps)^2}{d(x_{K_i(\eps)},y_\eps)} = O(h)
\]
\end{assumption}

\section{A general stability result}
 \label{sec:stability}

We prove a stability result for \eqref{eq_schema} modified by the introduction of discretization error terms  $R_K^{i,n}$, $R_{KK'}^{i,n}$ and $R_\eps^{i,n}$ as well as  $F_K^{i,n}$ which will be defined precisely in the sequel ($i\in\{1,2\}$):
\begin{multline} \label{eq_schema_gen}
\left( \frac{p_K^{i,n+1}-p_K^{i,n}}{\dt_i} - R_K^{i,n+1} \right) \meas(K) - \sumKp{i} \left( \frac{p_{K'}^{i,n+1}-p_K^{i,n+1}}{d(x_K,x_{K'})} - R_{KK'}^{i,n+1} \right) \meas([KK']) \\
- \sumEDE{i} (u_\eps^{i,n+1} - R_\eps^{i,n+1}) \meas(\eps) = F_K^{i,n+1} \meas(K) \quad \forall K \in \T_i \quad \forall n \in \{0,\ldots,N_i-1\}\,.
\end{multline}
Formula \eqref{rel_p_u}  is modified as well by introducing error terms at each time step $n$
\begin{equation} \label{def_erreur_consistance}
U_\eps^{i,n} = u_\eps^{i,n} - \frac{p_\eps^{i,n}-p_{K_i(\eps)}^{i,n}}{d_\eps^i} \quad \forall \eps \in \E_i \cup \E_{iD} \quad \forall n \in \{1,\ldots,N_i\} \quad \forall i \in \{1,2\}
\end{equation}
(with $p_\eps^{i,n} = 0$ for all $\eps \in \E_{iD}$). 

The interface conditions \eqref{cond_trans} are modified in the following manner by the terms $\P_{\eps} \in P_0(\E \times [0,T]_{\dt_e})$ and $\U_{\eps} \in P_0(\E \times [0,T]_{\dt_m})$ which will be defined in the sequel:
%
\begin{equation} \label{def_erreur_transmission}
\tag{IS$_1$'}
\left\{
\begin{aligned}
p_{\eps}^e =& \mathcal{Q}_e(p_{\eps}^m) + \P_{\eps}^e \\
u_{\eps}^m =& \mathcal{Q}_m(-u_{\eps}^e) + \U_{\eps}^m 
\end{aligned}
\right. \quad \forall \eps \in \E
\end{equation}
Interface conditions\eqref{cond_trans_rec} are similarly modified for all $\eps\in\E$ :
\begin{equation*}
\begin{split}
u_{\eps}^e =& \frac{\Q_e(p_{K_m(\eps)}^m) + \P_{\eps} - p_{K_e(\eps)}^e}{d_\eps^m+d_\eps^e} \\
=& \frac{\Q_e(p_{\eps}^m - d_\eps^m u_{\eps}^m + d_\eps^m U_{\eps}^m) + \P_{\eps} - p_{\eps}^e + d_\eps^e u_{\eps}^e - d_\eps^e U_{\eps}^e}{d_\eps^m+d_\eps^e}
\end{split}
\end{equation*}
The interface conditions \eqref{cond_trans_rec} are thus modified in the following manner:
\begin{equation} \label{def_erreur_transmission_rec}
\tag{IS$_2$'}
\left\{
\begin{aligned}
u^m_{\eps} =& \Q_m(-u^e_{\eps}) + \U_{\eps} \\
p_{\eps}^e + d_\eps^m u_{\eps}^e =& \Q_e(p_{\eps}^m - d_\eps^m u_{\eps}^m + d_\eps^m U_{\eps}^m) + \P_{\eps} - d_\eps^e U_{\eps}^e
\end{aligned}
\right. \quad \forall \eps \in \E
\end{equation}


We make some additional assumptions. 
\begin{assumption} \label{hyp_RKK}
$R_{KK'} + R_{K'K} = 0 \quad \forall K \in \T_i \quad \forall K' \in \N_i(K) \quad \forall i \in \{1,2\}$
\end{assumption}

\begin{assumption} \label{hyp_R}
We suppose that the following estimates are satisfied by the consistency errors:\\
for all $i \in \{1,2\}$ and all $n \in \{1,\ldots,N_i\}$, we have:
\begin{align*}
U_\eps^{i,n} =& O(h^{\eta_1}+\dt_i^{\gamma_1}) \quad \forall K \in \T_i \quad \forall \eps \in \E_{i}(K)\cup\E_{iD}(K) \\
R_K^{i,n} =& O(h^{\eta_2}+\dt_i^{\gamma_2}) \quad \forall K \in \T_i \\
R_{KK'}^{i,n} =& O(h^{\eta_3}+\dt_i^{\gamma_3}) \quad \forall K \in \T_i \quad \forall K' \in \mathcal{N}_i(K) \\
R_\eps^{i,n} =& O(h^{\eta_4}+\dt_i^{\gamma_4}) \quad \forall K \in \T_i \quad \forall \eps \in \E_i(K)\cup\E_{iD}(K) \\
F_K^{i,n} =& O(h^{\eta_5}+\dt_i^{\gamma_5}) \quad \forall K \in \T_i\\
\P^{i,n}_{\eps} =& O(h^{\eta_6}+\dt_2^{\gamma_6})\quad \forall \eps\in\E \\
\U^{i,n}_{\eps} =& O(h^{\eta_7}+\dt_1^{\gamma_7})\quad  \forall \eps\in\E
\end{align*}
\end{assumption}

We first prove an estimate for \eqref{eq_schema_gen}-\eqref{def_erreur_consistance} not taking into account the interface conditions:
\begin{lemma} \label{lemme_sans_int}
Let $(p^i,u^i(p^i))$ satisfy \eqref{eq_schema_gen}-\eqref{def_erreur_consistance} and suppose that assumptions \ref{hyp_maillage} and \ref{hyp_RKK} are satisfied.\\
 Then :
\begin{multline} \label{eq_gen}
\demi \sumi \sumK{i} (p_K^{i,N_i})^2 \meas(K) 
+ \demi \sumi \sumn{i} \dt_i \sumK{i}\ \sumKp{i} \frac{(p_{K'}^{i,n+1}-p_K^{i,n+1})^2}{d(x_K,x_{K'})} \meas([KK']) \\
+ \sumi \sumn{i} \dt_i \sumK{i}\ \sumEDE{i} \frac{(p_\eps^{i,n+1}-p_K^{i,n+1})^2}{\dxy} \meas(\eps) \\
\leqslant  
\sumi \lprod u^i,p^i \rprod
+ \demi \sumi \sumK{i} (p_K^{i,0})^2 \meas(K) \\
+ \sumi \sumn{i} \dt_i \sumK{i} (F_K^{i,n+1}+R_K^{i,n+1}) p_K^{i,n+1} \meas(K) \\
+ \demi \sumi \sumn{i} \dt_i \sumK{i}\ \sumKp{i} R_{KK'}^{i,n+1} (p_{K'}^{i,n+1}- p_K^{i,n+1}) \meas([KK']) \\
- \sumi \sumn{i} \dt_i \sumK{i}\ \sumEDE{i} R_\eps^{i,n+1} p_K^{i,n+1} \meas(\eps) \\
- \sumi \sumn{i} \dt_i \sumK{i}\ \sumEDE{i} U_\eps^{i,n+1} (p_\eps^{i,n+1}-p_K^{i,n+1}) \meas(\eps)
\end{multline}
\end{lemma}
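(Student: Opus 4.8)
The plan is to carry out a standard discrete energy argument: multiply the modified scheme \eqref{eq_schema_gen} at cell $K$ and time level $n+1$ by the test value $\dt_i\, p_K^{i,n+1}$, and sum the result over all $K\in\T_i$, all $n\in\{0,\ldots,N_i-1\}$ and both subdomains $i\in\{1,2\}$. This produces, on the left-hand side, four groups of terms corresponding to the discrete time derivative, the interior diffusion fluxes (the $\sumKp{i}$ sum), the $R_{KK'}$ corrections, and the boundary/interface fluxes (the $\sumEDE{i}$ sum), while the source $F_K^{i,n+1}$ and the remaining $R_K^{i,n+1}$, $R_\eps^{i,n+1}$ contributions go to the right. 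The target inequality \eqref{eq_gen} is then obtained by reorganizing each group and moving all non-quadratic contributions to the right-hand side.

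For the time-derivative term, after cancelling $\dt_i$ the contribution is $\sumi\sumn{i}\sumK{i}(p_K^{i,n+1}-p_K^{i,n})\,p_K^{i,n+1}\meas(K)$; I would apply the elementary inequality $a(a-b)\geqslant\demi(a^2-b^2)$ termwise and telescope in $n$, which yields $\demi\sumi\sumK{i}\big((p_K^{i,N_i})^2-(p_K^{i,0})^2\big)\meas(K)$. The initial-data part is sent to the right, and the discarded nonnegative remainder $\demi(p_K^{i,n+1}-p_K^{i,n})^2$ is precisely what turns the identity into the inequality $\leqslant$. For the interior diffusion term I would reorganize the double sum $\sumK{i}\sumKp{i}$ face by face: each interior face $[KK']$ is visited once from $K$ and once from $K'$, and summing the two tested contributions yields $+\frac{(p_{K'}^{i,n+1}-p_K^{i,n+1})^2}{d(x_K,x_{K'})}\meas([KK'])$ for that face; rewriting the face sum as $\demi\sumK{i}\sumKp{i}$ of this symmetric quantity reproduces the factor $\demi$ in the corresponding left-hand term of \eqref{eq_gen}. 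The $R_{KK'}$ sum is reorganized the same way, and here Assumption \ref{hyp_RKK} is essential: the antisymmetry $R_{KK'}+R_{K'K}=0$ converts the two contributions of a face into $R_{KK'}^{i,n+1}(p_{K'}^{i,n+1}-p_K^{i,n+1})$, producing the $\demi\sum R_{KK'}(p_{K'}-p_K)$ term on the right.

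The step I expect to require the most care is the boundary/interface flux term $-\sumEDE{i}u_\eps^{i,n+1}\meas(\eps)$ tested against $p_K^{i,n+1}$. Here each face $\eps$ is adjacent to a single cell $K=K_i(\eps)$, so no symmetrization occurs; instead I would substitute the flux definition \eqref{def_erreur_consistance}, writing $u_\eps^{i,n+1}=\frac{p_\eps^{i,n+1}-p_K^{i,n+1}}{d_\eps^i}+U_\eps^{i,n+1}$, and then use the algebraic identity $-p_K\frac{p_\eps-p_K}{d_\eps^i}=\frac{(p_\eps-p_K)^2}{d_\eps^i}-p_\eps\frac{p_\eps-p_K}{d_\eps^i}$ together with $\frac{p_\eps-p_K}{d_\eps^i}=u_\eps-U_\eps$ to split off the quadratic jump. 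This simultaneously generates the third quadratic term of \eqref{eq_gen} (with $d_\eps^i=\dxy$), the scalar product $\sumi\lprod u^i,p^i\rprod$ — using that $p_\eps^{i,n+1}=0$ on the Dirichlet faces $\E_{iD}$, so that only the interface faces $\eps\in\E$ survive — and the consistency cross term, which upon transfer to the right becomes $-\sumi\sumn{i}\dt_i\sumK{i}\sumEDE{i}U_\eps^{i,n+1}(p_\eps^{i,n+1}-p_K^{i,n+1})\meas(\eps)$. Collecting the three quadratic terms on the left and all remaining ones on the right — the $(F_K+R_K)p_K$ source, the $R_{KK'}$ and $R_\eps$ corrections, the $U_\eps$ jump term, the initial data, and the scalar product — then reproduces \eqref{eq_gen} exactly. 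Note that no interface condition, \eqref{cond_trans} or \eqref{cond_trans_rec}, is invoked at this stage, in keeping with the statement of Lemma \ref{lemme_sans_int}.
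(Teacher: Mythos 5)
Your proposal is correct and follows essentially the same route as the paper's proof: testing \eqref{eq_schema_gen} with $\dt_i\, p_K^{i,n+1}$, telescoping the time term via $a(a-b)\geqslant\demi(a^2-b^2)$, symmetrizing the interior-flux and $R_{KK'}$ sums (the latter via Assumption \ref{hyp_RKK}), and splitting the edge fluxes with \eqref{def_erreur_consistance} together with $p_\eps^{i,n}=0$ on $\E_{iD}$ to isolate the quadratic jump, the interface scalar product, and the $U_\eps$ cross term. No discrepancies to report.
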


\begin{proof}
In each subdomain $\Omega_i$, we multiply \eqref{eq_schema_gen} by $\dt_i p_K^{i,n+1}$ and we sum over cells $K$ and time step $n$ and make use of the following formula:
\begin{gather*}
\begin{aligned}
\sumn{i} (p_K^{i,n+1}-p_K^{i,n}) p_K^{i,n+1} =& \demi \sumn{i} \left[ (p_K^{i,n+1})^2 - (p_K^{i,n})^2 + (p_K^{i,n+1}-p_K^{i,n})^2 \right] \\
=& \demi \left[ (p_K^{i,N_i})^2 - (p_K^{i,0})^2 + \sumn{i} (p_K^{i,n+1}-p_K^{i,n})^2 \right] \\
\geqslant& \demi \left[ (p_K^{i,N_i})^2 - (p_K^{i,0})^2 \right]
\end{aligned} \\
\sumK{i} \sumKp{i} \frac{p_{K'}^{i,n}-p_K^{i,n}}{d(x_K,x_{K'})} p_K^{i,n} \meas([KK']) = - \demi \sumK{i} \sumKp{i} \frac{(p_{K'}^{i,n}-p_K^{i,n})^2}{d(x_K,x_{K'})} \meas([KK']) \\
\intertext{Using the fact that $R_{KK'} + R_{K'K} = 0$ (Assumption \ref{hyp_RKK}), we have :}
\sumK{i} \sumKp{i} R_{KK'}^{i,n} p_K^{i,n} \meas([KK']) = - \demi \sumK{i} \sumKp{i} R_{KK'}^{i,n} (p_{K'}^{i,n} - p_K^{i,n}) \meas([KK'])
\end{gather*}
Then, we get:
\begin{multline*}
\demi \sumK{i} \left( (p_K^{i,N_i})^2 - (p_K^{i,0})^2 \right) \meas(K) 
+ \demi \sumn{i} \dt_i \sumK{i}\ \sumKp{i} \frac{(p_{K'}^{i,n+1}-p_K^{i,n+1})^2}{d(x_K,x_{K'})} \meas([KK']) \\
- \sumn{i} \dt_i \sumK{i}\ \sumEDE{i} u_\eps^{i,n+1} p_K^{i,n+1} \meas(\eps)
\leqslant \sumn{i} \dt_i \sumK{i} F_K^{i,n+1} p_K^{i,n+1} \meas(K) \\
+ \sumn{i} \dt_i \sumK{i} R_K^{i,n+1} p_K^{i,n+1} \meas(K) + \demi \sumn{i} \dt_i \sumK{i}\ \sumKp{i} R_{KK'}^{i,n+1} (p_{K'}^{i,n+1}- p_K^{i,n+1}) \meas([KK']) \\
- \sumn{i} \dt_i \sumK{i}\ \sumEDE{i} R_\eps^{i,n+1} p_K^{i,n+1} \meas(\eps)
\end{multline*}
We sum over the subdomains and make use of the following formula for any $\eps \in \E_i \cup \E_{iD}$ and $K=K(\eps)$,
\begin{equation*}
\begin{split}
- u_\eps^{i,n} p_K^{i,n} =& - u_\eps^{i,n} p_\eps^{i,n} - u_\eps^{i,n} (p_K^{i,n}-p_\eps^{i,n}) \\
=& - u_\eps^{i,n} p_\eps^{i,n} + \left(\frac{p_K^{i,n}-p_\eps^{i,n}}{d(x_K,y_\eps)} - U_\eps^{i,n} \right) (p_K^{i,n}-p_\eps^{i,n}) \\
=& - u_\eps^{i,n} p_\eps^{i,n} + \frac{(p_K^{i,n}-p_\eps^{i,n})^2}{d(x_K,y_\eps)} - (p_K^{i,n}-p_\eps^{i,n}) U_\eps^{i,n}
\end{split}
\end{equation*} 
Then, \eqref{eq_gen} follows from $p_\eps^{i,n} = 0$ for all $\eps \in \E_{iD}$.
\end{proof}

We focus now on the interface terms on $\Gamma$ that appear in the first term of the right hand side of \eqref{eq_gen}. We first consider the interface matching conditions  \eqref{def_erreur_transmission}.

\begin{lemma} \label{lemme}
Let $(p,u)$ satisfy \eqref{eq_schema_gen}, \eqref{def_erreur_consistance}, \eqref{def_erreur_transmission}. 
Then,
we have:
\begin{equation} \label{estim}
\sumi \lprod u^i,p^i \rprod = \lprod p^m,\U_{\E} \rprod + \lprod u^e,\P_{\E} \rprod
\end{equation}
\end{lemma}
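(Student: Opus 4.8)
The plan is to reduce the identity to one-dimensional $L^2(0,T)$ inner products on a single interface face and then invoke the duality relation of Lemma~\ref{hyp_transposition}. Because the grids match in space and are non-conforming only in time, the spatial weight $\sum_{\eps\in\E}\meas(\eps)$ factors out, so it suffices to establish the identity face by face. Fix $\eps\in\E$ and write $\langle\cdot,\cdot\rangle$ for the $L^2(0,T)$ inner product of piecewise-constant-in-time functions; the time-sums $\sumn{i}\dt_i\, u_\eps^{i,n+1}p_\eps^{i,n+1}$ occurring in the definition of the discrete scalar product are precisely $\langle u^i_\eps,p^i_\eps\rangle$ for $i\in\{m,e\}$, so the left-hand side of \eqref{estim} becomes $\sum_{\eps}\bigl(\langle u^m_\eps,p^m_\eps\rangle+\langle u^e_\eps,p^e_\eps\rangle\bigr)\meas(\eps)$.

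First I would use the Neumann condition in \eqref{def_erreur_transmission}, namely $u^m_\eps=\Q_m(-u^e_\eps)+\U_\eps$, to split $\langle u^m_\eps,p^m_\eps\rangle=\langle\Q_m(-u^e_\eps),p^m_\eps\rangle+\langle\U_\eps,p^m_\eps\rangle$. The first term is exactly the setting of Lemma~\ref{hyp_transposition}: since $-u^e_\eps\in P_0([0,T]_{\dt_e})$ and $p^m_\eps\in P_0([0,T]_{\dt_m})$, the self-adjointness of the $L^2$ projections expressed by that lemma gives $\langle\Q_m(-u^e_\eps),p^m_\eps\rangle=\langle -u^e_\eps,\Q_e(p^m_\eps)\rangle$ (the statement of Lemma~\ref{hyp_transposition} is symmetric in the indices $1,2$, so this holds for either assignment of $\{m,e\}=\{1,2\}$).

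Next I would eliminate $\Q_e(p^m_\eps)$ through the Dirichlet condition $p^e_\eps=\Q_e(p^m_\eps)+\P_\eps$, that is $\Q_e(p^m_\eps)=p^e_\eps-\P_\eps$. Substituting yields $\langle\Q_m(-u^e_\eps),p^m_\eps\rangle=-\langle u^e_\eps,p^e_\eps\rangle+\langle u^e_\eps,\P_\eps\rangle$, whence $\langle u^m_\eps,p^m_\eps\rangle+\langle u^e_\eps,p^e_\eps\rangle=\langle p^m_\eps,\U_\eps\rangle+\langle u^e_\eps,\P_\eps\rangle$; the two copies of $\langle u^e_\eps,p^e_\eps\rangle$ cancel, which is the crux of the computation. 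Summing over $\eps$ against $\meas(\eps)$ then gives \eqref{estim}.

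The only genuinely delicate point is tracking the two time meshes. One must check that each pairing joins functions living on a common time grid, so that Lemma~\ref{hyp_transposition} is applied in the correct direction (the projected quantity lands in the partner subdomain's space), and that in the final expression $\U_\eps\in P_0([0,T]_{\dt_m})$ is paired with the master quantity $p^m$ while $\P_\eps\in P_0([0,T]_{\dt_e})$ is paired with the slave quantity $u^e$, so that the right-hand side reads $\lprod p^m,\U_{\E}\rprod$ (master time mesh) plus $\lprod u^e,\P_{\E}\rprod$ (slave time mesh). Once this mesh- and sign-bookkeeping is fixed, the argument is purely algebraic.
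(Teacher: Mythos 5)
Your proof is correct and takes essentially the same route as the paper's: substitute the two conditions of \eqref{def_erreur_transmission} into the interface products and kill the cross terms $\lprod -\Q_m(u^e),p^m \rprod + \lprod u^e,\Q_e(p^m) \rprod$ via Lemma~\ref{hyp_transposition}. Your face-by-face reduction and your ordering (Neumann condition, then the projection duality, then the Dirichlet condition to cancel $\lprod u^e,p^e\rprod$ explicitly) are only a cosmetic rearrangement of the paper's one-line computation.
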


\begin{proof}
From the interface conditions \eqref{def_erreur_transmission}, we have:
\begin{equation*} \label{term_int_sans_rec}
\begin{split}
\sumn{m}& \dt_m \sumeps u_\eps^{m,n+1} p_\eps^{m,n+1} \meas(\eps) + \sumn{e} \dt_e \sumeps u_\eps^{e,n+1} p_\eps^{e,n+1} \meas(\eps) \\ 
=& \lprod u^m,p^m \rprod + \lprod u^e,p^e \rprod \\
=& \lprod \Q_m(-u^e)+\U_{\E},p^m \rprod + \lprod u^e,\Q_e(p^m)+\P_{\E} \rprod \\
=& \underbrace{\lprod -\Q_m(u^e),p^m \rprod + \lprod  u^e,\Q_e(p^m) \rprod}_{=0 \text{ (Lemma \ref{hyp_transposition})}} \\
&+ \lprod \U_{\E},p^m \rprod + \lprod u^e,\P_{\E} \rprod
\end{split}
\end{equation*}
\end{proof}

We consider now the interface scheme \eqref{def_erreur_transmission_rec}

\begin{lemma} \label{lemme_rec}
Let $(p,u)$ satisfy \eqref{eq_schema_gen}, \eqref{def_erreur_consistance}, \eqref{def_erreur_transmission_rec}. 
Then, we have
\begin{multline} \label{estim_rec}
\sumi \lprod p^i,u^i \rprod \leqslant \lprod p^m,\U_{\E} \rprod 
+ \lprod u^e , \P_{\E} - d_\E^e U_{\E}^e - d_\E^m (\U_{\E} - U_{\E}^m) \rprod
\end{multline}
where $d_\E^i$ is the piecewise constant function on $\Gamma$ such that $d_\E^i(x) = d_\eps^i$ for all $x \in \eps \in \E$.
\end{lemma}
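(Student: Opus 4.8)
The plan is to mirror the proof of Lemma \ref{lemme}. Starting from $\sumi \lprod p^i,u^i \rprod = \lprod u^m,p^m \rprod + \lprod u^e,p^e \rprod$, I would insert into the master term the Neumann relation $u^m = \Q_m(-u^e) + \U_{\E}$ of \eqref{def_erreur_transmission_rec}, and into the slave term the rearranged Dirichlet relation $p^e = \Q_e(p^m - d_\E^m u^m + d_\E^m U_{\E}^m) + \P_{\E} - d_\E^e U_{\E}^e - d_\E^m u^e$. The key observation is that the distance $d_\eps^m$ is constant in time on each face $\eps$, so it commutes with the time projection, $\Q_e(d_\E^m\,\cdot) = d_\E^m\,\Q_e(\cdot)$. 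Applying the adjointness of $\Q_m,\Q_e$ (Lemma \ref{hyp_transposition}) to the term $\lprod u^e,\Q_e(p^m - d_\E^m u^m + d_\E^m U_{\E}^m)\rprod = \lprod \Q_m(u^e), p^m - d_\E^m u^m + d_\E^m U_{\E}^m\rprod$ and pairing it with $\lprod \Q_m(-u^e),p^m\rprod$ from the master term cancels the $p^m$ contribution exactly as in Lemma \ref{lemme}.

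After this cancellation I am left with the identity
\[
\sumi \lprod p^i,u^i \rprod = \lprod p^m,\U_{\E}\rprod + \lprod u^e,\P_{\E} - d_\E^e U_{\E}^e\rprod + \lprod \Q_m(u^e), d_\E^m(U_{\E}^m - u^m)\rprod - \lprod u^e, d_\E^m u^e\rprod .
\]
Comparing with the claimed right-hand side of \eqref{estim_rec}, the whole statement reduces to the inequality
\[
\lprod \Q_m(u^e), d_\E^m(U_{\E}^m - u^m)\rprod - \lprod u^e, d_\E^m u^e\rprod \;\leqslant\; \lprod u^e, d_\E^m(U_{\E}^m - \U_{\E})\rprod .
\]

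To establish it I would introduce the weighted inner product $(f,g)_d := \sumeps d_\eps^m \meas(\eps)\int_0^T f_\eps g_\eps\,dt$, with $\|f\|_d^2 := (f,f)_d$; since every quantity involved is piecewise constant in time, each $\lprod\cdot,\cdot\rprod$ weighted by $d_\E^m$ coincides with this product, and because the weight $d_\eps^m$ is positive and time-constant per face, $\Q_m$ is still the $(\cdot,\cdot)_d$-orthogonal projection onto the functions piecewise constant on the master time mesh. I would then use the Neumann relation once more in the form $\Q_m(u^e) = \U_{\E} - u^m$ to eliminate $u^m$, which turns the required inequality into
\[
(\Q_m(u^e) - u^e,\; U_{\E}^m - \U_{\E})_d + \|\Q_m(u^e)\|_d^2 - \|u^e\|_d^2 \;\leqslant\; 0 .
\]
The cross term vanishes: $U_{\E}^m - \U_{\E}$ lies in $P_0(\E\times[0,T]_{\dt_m})$, while $\Q_m(u^e) - u^e$ is the projection residual, hence $(\cdot,\cdot)_d$-orthogonal to every function on the master mesh. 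The quadratic remainder satisfies $\|\Q_m(u^e)\|_d^2 - \|u^e\|_d^2 \leqslant 0$ because an orthogonal projection is a contraction. Together these give the inequality, and hence \eqref{estim_rec}.

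I expect the main obstacle to be bookkeeping rather than anything conceptual: one must keep careful track of which time mesh each object lives on ($U_{\E}^m$ and $\U_{\E}$ on the master mesh, $u^e$, $\P_{\E}$, $U_{\E}^e$ on the slave mesh) and verify that passing to the weighted product $(\cdot,\cdot)_d$ preserves both the adjointness of $\Q_m,\Q_e$ and the orthogonal-projection structure of $\Q_m$ — which is precisely where the hypothesis that $d_\eps^m$ is constant in time on each face is used. Once this is secured, the fact that \eqref{estim_rec} is an inequality, rather than the equality of Lemma \ref{lemme}, is seen to originate entirely from the contraction estimate $\|\Q_m(u^e)\|_d \leqslant \|u^e\|_d$.
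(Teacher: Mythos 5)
Your proof is correct and follows essentially the same route as the paper's: substitute both interface relations of \eqref{def_erreur_transmission_rec}, cancel the $p^m$ contributions via the adjointness of $\Q_m,\Q_e$ (Lemma \ref{hyp_transposition}), re-use the Neumann relation to eliminate $u^m$, and conclude from the $L^2$-projection property of $\Q_m$ together with the positivity and time-constancy of the weight $d_\E^m$. The only difference is bookkeeping: the paper compresses your last two observations (vanishing of the cross term and the contraction $\|\Q_m(u^e)\|_d\leqslant\|u^e\|_d$) into the single inequality $\lprod u^e , d_\E^m\left(\Q_m(u^e)-u^e\right) \rprod \leqslant 0$, justified by the phrase ``since $\Q_m$ is a $L^2$ projection''.
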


\begin{proof}
Using the interface conditions \eqref{def_erreur_transmission_rec}, we have: 
\[
\begin{array}{l}
\sumi \lprod p^i,u^i \rprod = \lprod  -\Q_m(u^e) + \U_\E , p^m \rprod \\
   + \lprod u^e , \Q_e(p^m - d_\E^m u_{\E}^m + d_\E^m U_{\E}^m) + \P_{\E} - d_\E^m u_{\E}^e - d_\E^e U_{\E}^e \rprod
\end{array}
\]
Since we use $L^2$ projection, we have
\[
\begin{array}{l}
\sumi \lprod p^i,u^i \rprod = \lprod  - u^e + \U_\E , p^m \rprod \\
   + \lprod u^e , p^m - d_\E^m u_{\E}^m + d_\E^m U_{\E}^m + \P_{\E} - d_\E^m u_{\E}^e - d_\E^e U_{\E}^e \rprod
\end{array}
\]
Simplifying the relation and using again the first equation of  \eqref{def_erreur_transmission_rec}, 
\[
\begin{array}{l}
\sumi \lprod p^i,u^i \rprod = \lprod   \U_\E , p^m \rprod \\
   + \lprod u^e , - d_\E^m \Q_m(-u^e)-d_\E^m \U_{\E} + d_\E^m U_{\E}^m + \P_{\E} - d_\E^m u_{\E}^e - d_\E^e U_{\E}^e \rprod
\end{array}
\]
Since $\Q_m$ is a $L^2$ projection, we have
\[
\begin{array}{l}
\sumi \lprod p^i,u^i \rprod \leq \lprod   \U_\E , p^m \rprod \\
   + \lprod u^e , -d_\E^m \U_{\E} + d_\E^m U_{\E}^m + \P_{\E} - d_\E^e U_{\E}^e \rprod
\end{array}
\]

\end{proof}

\begin{theorem} \label{theorem}
 Suppose assumptions  \ref{hyp_maillage}, \ref{hyp_RKK} and \ref{hyp_R} hold. Let $(p,u)$ satisfy \eqref{eq_schema_gen}, \eqref{def_erreur_consistance}. If one of the two conditions is satisfied, 
\begin{enumerate}[i)]
\item  $(p,u)$ satisfies transmission conditions \eqref{def_erreur_transmission},
\item $(p,u)$ satisfies transmission conditions \eqref{def_erreur_transmission_rec} and assumption \ref{hyp_alpha} holds
\end{enumerate}
Then, we have the following estimate:
\begin{equation*}
\sumi |p^i|_{1,\T_i,\dt_i}^2 + 2 \sumi \|p^{i,N_i}\|_{L^2(\O_i)}^2 \leqslant 2 \sumi \|p^{i,0}\|_{L^2(\O_i)}^2 + O(h)^\eta + O(\dt_2)^\gamma
\end{equation*}
where $\eta = 2\min(\eta_j)_{j=1,\ldots,7}$ et $\gamma = 2\min(\gamma_j)_{j=1,\ldots,7}$.
\end{theorem}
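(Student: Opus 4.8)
The plan is to start from the energy inequality \eqref{eq_gen} of Lemma~\ref{lemme_sans_int}, to replace the interface scalar product $\sumi\lprod u^i,p^i\rprod$ by the interface identities of Lemmas~\ref{lemme} and~\ref{lemme_rec}, and then to dominate every remaining right-hand side term by Young's inequality, keeping the quadratic-in-$p$ parts on the left and sending the consistency errors of Assumption~\ref{hyp_R} to the right. The left-hand side of \eqref{eq_gen} already controls the target quantity up to fixed constants: the terminal sum is $\demi\sumi\|p^{i,N_i}\|_{L^2(\O_i)}^2$, the interior-face sum equals $\demi$ times the interior part of the seminorm \eqref{eq:seminomrH1} (the $\demi$ offsetting the double counting of faces in $\mathcal N_i(K)$), and the remaining face sum equals the interface-plus-Dirichlet part of \eqref{eq:seminomrH1} after using $p_\eps^{i,n}=0$ on $\E_{iD}$. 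Since the interface and Dirichlet part carries coefficient one while the interior part carries $\demi$, the sum of these two face terms is at least $\demi\sumi|p^i|_{1,\T_i,\dt_i}^2$. Multiplying \eqref{eq_gen} by $4$ therefore produces $2\sumi\|p^{i,N_i}\|^2$ and at least $2\sumi|p^i|_{1,\T_i,\dt_i}^2$ on the left, turns the data term into $2\sumi\|p^{i,0}\|_{L^2(\O_i)}^2$ exactly as in the statement, and leaves one full copy of the seminorm as a surplus to absorb the lower-order terms.

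I next eliminate the interface scalar product. Under hypothesis~(i) I insert the identity \eqref{estim}, which replaces $\sumi\lprod u^i,p^i\rprod$ by $\lprod p^m,\U_\E\rprod+\lprod u^e,\P_\E\rprod$; under hypothesis~(ii) I insert the inequality \eqref{estim_rec}, which in addition produces the terms $\lprod u^e,-d_\E^e U_\E^e-d_\E^m(\U_\E-U_\E^m)\rprod$. In both cases the genuinely coupling cross terms between the two subdomains have already cancelled through the adjunction property of the projections (Lemma~\ref{hyp_transposition}), so only products of the transmission errors $\P_\E,\U_\E$ (and, in case~(ii), the distance-weighted flux errors) against the master trace $p^m$ and the slave flux $u^e$ survive. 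This is the only place where the two schemes differ, and case~(ii) is exactly where Assumption~\ref{hyp_alpha} is required: the coefficient $d_\eps^m$ multiplying $u_\eps^e=(p_\eps^e-p_{K_e(\eps)}^e)/d_\eps^e$ must be comparable to $d_\eps^e$ in order to bound that contribution by the slave seminorm.

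Then I estimate the surviving right-hand side terms one by one. The volume products $(F_K^{i,n+1}+R_K^{i,n+1})p_K^{i,n+1}$ and $R_\eps^{i,n+1}p_K^{i,n+1}$, together with the cell part of $\lprod p^m,\U_\E\rprod$, are split with a plain weight; their solution parts are of the form $\sum_n\dt_i\|p^{i,n+1}\|^2$ and are purely of zero order, so I absorb them into the surplus seminorm by a discrete Poincar\'e inequality, which is available because \eqref{eq:seminomrH1} carries the Dirichlet face contributions on $\E_{iD}$ and, through the weakly matched values at $\Gamma$, links the two subdomains; the Young parameter is fixed small enough that the absorbed constant stays below the surplus. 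The gradient-type products $R_{KK'}^{i,n+1}(p_{K'}^{i,n+1}-p_K^{i,n+1})$ and $U_\eps^{i,n+1}(p_\eps^{i,n+1}-p_K^{i,n+1})$, as well as the interface products in which $u^e$ and the differences $p_\eps^m-p_{K_m(\eps)}^m$ appear, are split with the weight $d$ so that their difference parts fall into the interior and interface parts of the seminorm. The accompanying error parts come weighted by $\meas(\eps)$ and powers of $d_\eps^i$, and are controlled against cell volumes by Assumption~\ref{hyp_yepsilon}.

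Finally I collect the error contributions. Each is a square of a consistency error times a geometric weight, summed over faces or cells and over the $N_i$ time steps; by Assumption~\ref{hyp_R} each error is $O(h^{\eta_j}+\dt_i^{\gamma_j})$, and because $N_i\dt_i=T$ the time summation costs only the fixed factor $T$, so squaring yields $O(h^{2\eta_j}+\dt_i^{2\gamma_j})$. Using $\dt_1=\dt_2/\K\leqslant\dt_2$ to bound every $\dt_i^{2\gamma_j}$ by $\dt_2^{2\gamma_j}$ and retaining the worst exponents gives exactly $O(h)^{\eta}+O(\dt_2)^{\gamma}$ with $\eta=2\min_j\eta_j$ and $\gamma=2\min_j\gamma_j$, and combining with the absorption of the previous paragraph yields the stated estimate. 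I expect the interface analysis of the second and third paragraphs to be the main obstacle: one must show that the mixed flux--trace products on $\Gamma$ are dominated by the available seminorm energy, uniformly in the time-refinement ratio $\K$ (which relies on the $L^2$-stability of $\Q_m,\Q_e$ behind Lemma~\ref{hyp_transposition}) and, in case~(ii), uniformly in the mesh aspect ratio through Assumption~\ref{hyp_alpha}; the most delicate bookkeeping is the conversion, via Assumption~\ref{hyp_yepsilon}, of the face-measure-weighted interface error sums into contributions of the announced order.
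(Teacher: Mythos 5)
Your overall route coincides with the paper's: start from \eqref{eq_gen} of Lemma~\ref{lemme_sans_int}, substitute the interface identities \eqref{estim} and \eqref{estim_rec} of Lemmas~\ref{lemme} and~\ref{lemme_rec} (with Assumption~\ref{hyp_alpha} controlling the weight $\sumn{e} \dt_e \sumE{e} \frac{(d_\eps^m)^2}{d_\eps^e}\,\meas(\eps)$ in case ii), bound every remaining product by Young's inequality, and absorb the solution-dependent parts into the left-hand side. There is, however, one genuine gap in your absorption step. The solution part that Young's inequality produces from the interface products $\lprod p^m,\U_{\E} \rprod$ (and likewise from the $R_\eps$ terms) is $\sumn{m} \dt_m \sumeps (p_\eps^{m,n+1})^2 \meas(\eps)$, a face-measure-weighted sum over $\G$, i.e.\ a discrete trace norm $\|p^m\|_{L^2(0,T;L^2(\G))}^2$. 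You assert these parts are ``of the form $\sum_n \dt_i \|p^{i,n+1}\|^2$'' and absorb them with the discrete Poincar\'e inequality alone. That is not correct: $\meas(\eps)$ scales like $\meas(K)/h$, so a trace sum cannot be bounded by the volume $L^2$ norm uniformly in $h$, and splitting $p_\eps^m = (p_\eps^m - p_{K_m(\eps)}^m) + p_{K_m(\eps)}^m$ still leaves the trace-type quantity $\sumeps (p_{K_m(\eps)}^{m,n+1})^2 \meas(\eps)$. The missing ingredient is the discrete trace inequality \eqref{trace} proved in \cite{GHV00}, which bounds $\|p^{i}\|_{L^2(0,T;L^2(\partial\O_i))}^2$ by $C(\O_i) \left( \|p^{i}\|_{L^2(0,T;L^2(\O_i))}^2 + |p^{i}|_{1,\T_i,\dt_i}^2 \right)$; only after invoking it can the Poincar\'e inequality \eqref{poincare} finish the absorption, with the Young constants chosen small relative to $C(\O_i)$. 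This is exactly how the paper closes the argument, and without it your estimate does not close.

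A second, more minor point: you invoke Assumption~\ref{hyp_yepsilon} to convert the face-measure-weighted consistency-error sums, but that assumption is not among the hypotheses of Theorem~\ref{theorem} (it is used only later, in the error estimate, to verify Assumption~\ref{hyp_R} for the interpolation of the exact solution). What is needed here is only the purely geometric bound $\sumK{i}\ \sumEDE{i} \dxy\, \meas(\eps) \leqslant d\, \meas(\O_i)$ from \cite{EGH00}, which holds under Assumption~\ref{hyp_maillage}; replacing your appeal to Assumption~\ref{hyp_yepsilon} by this bound keeps the proof within the stated hypotheses.
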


\begin{proof}
The proof consists in estimating the terms in formula \eqref{eq_gen} of lemma~\ref{lemme_sans_int}. We often use the relation
\begin{equation*}
|ab| \leqslant \frac{C}{2} a^2 + \frac{1}{2C} b^2 \quad \forall (a,b) \in \mathbb{R}^2 \quad \forall C \in \mathbb{R}^*_+
\end{equation*}
with various constants $C_i$ which are independent of the parameters of the mesh size. For the terms that are classical in the finite volume theory, we simply write the estimate.

We begin with the estimate of the classical term 
\begin{gather*}
\begin{aligned}
\sumi \sumn{i} \dt_i \sumK{i}& F_K^{i,n+1} p_K^{i,n+1} \meas(K) 
\leqslant& \frac{C_5}{2} \sumi \sumn{i} \dt_i \sumK{i} (p_K^{i,n+1})^2 \meas(K) + O(h^{2\eta_5}+\dt_i^{2\gamma_5})
\end{aligned}
\end{gather*}
We also have the term
\[
\sumi \sumn{i} \dt_i \sumK{i} R_K^{i,n+1} p_K^{i,n+1} \meas(K) \leqslant \frac{C_2}{2} \sumi \sumn{i} \dt_i \sumK{i} (p_K^{i,n+1})^2 \meas(K) + O(h^{2\eta_2}+\dt_i^{2\gamma_2}) 
\]
and the term
\begin{gather*}
\begin{aligned}
- \sumi \sumn{i} \dt_i \sumK{i} \sumE{i} R_\eps^{i,n+1} p_\eps^{i,n+1} \meas(\eps) \leqslant& \frac{C_4}{2} \sumi \sumn{i} \dt_i \sumK{i} \sumE{i} (p_\eps^{i,n+1})^2 \meas(\eps) \\
&+ O(h^{2\eta_4}+\dt_i^{2\gamma_4})
\end{aligned}
\end{gather*}
We consider now 
\begin{gather*}
\begin{aligned}
- \sumi \sumn{i} \dt_i \sumK{i}& \sumEDE{i} (p_\eps^{i,n+1}-p_K^{i,n+1}) U_\eps^{i,n+1} \meas(\eps) \\
=& - \sumi \sumn{i} \dt_i \sumK{i}\ \sumEDE{i} \frac{p_\eps^{i,n+1}-p_K^{i,n+1}}{\sqrt{\dxy}} U_\eps^{i,n+1} \sqrt{\dxy} \meas(\eps) \\
\leqslant& \frac{C_1}{2} \sumi \sumn{i} \dt_i \sumK{i}\ \sumEDE{i} \frac{(p_\eps^{i,n+1}-p_K^{i,n+1})^2}{\dxy} \meas(\eps) \\
&+ \frac{1}{2C_1} \sumi \sumn{i} \dt_i \sumK{i}\ \sumEDE{i} (U_\eps^{i,n+1})^2 \dxy \meas(\eps) \\
\leqslant& \frac{C_1}{2} \sumi \sumn{i} \dt_i \sumK{i}\ \sumEDE{i} \frac{(p_\eps^{i,n+1}-p_K^{i,n+1})^2}{\dxy} \meas(\eps) \\
&+ \frac{1}{2C_1} O(h^{2\eta_1}+\dt_i^{2\gamma_1}) \sumi \sumn{i} \dt_i \sumK{i}\ \sumEDE{i} \dxy \meas(\eps) \\
\leqslant& \frac{C_1}{2} \sumi \sumn{i} \dt_i \sumK{i}\ \sumEDE{i} \frac{(p_\eps^{i,n+1}-p_K^{i,n+1})^2}{\dxy} \meas(\eps) + O(h^{2\eta_1}+\dt_i^{2\gamma_1})
\end{aligned} 
\end{gather*}
where we have used the following formula (see \cite{EGH00})
\begin{equation}
  \label{eq:vraivolume}
\sum_{K\in \mathcal{T}_i} \sum_{K' \in \mathcal{N}_i(K)} d(x_K,x_{K'})\, meas([K,K']
+  \sumK{i}\ \sumEDE{i} \dxy \,\meas(\eps) \\
\le d\, meas(\Omega_i) \nonumber
\end{equation}

In a classical way, we get 
\begin{gather*}
\begin{aligned}
\demi \sumi \sumn{i} \dt_i& \sumK{i}\ \sumKp{i} R_{KK'}^{i,n+1} (p_K^{i,n+1}-p_{K'}^{i,n+1}) \meas([KK']) \\
\leqslant& \frac{C_3}{4} \sumi \sumn{i} \dt_i \sumK{i}\ \sumKp{i} \frac{(p_K^{i,n+1}-p_{K'}^{i,n+1})^2}{d(x_K,x_{K'})} \meas([KK']) + O(h^{2\eta_3}+\dt_i^{2\gamma_3})
\end{aligned}
\end{gather*}
and
\begin{gather*}
\begin{aligned}
\sumi \sumn{i} \dt_i \sumK{i}& \sumEDE{i} R_\eps^{i,n+1} (p_\eps^{i,n+1} - p_K^{i,n+1}) \meas(\eps) \\
\leqslant& \frac{C_4'}{2} \sumi \sumn{i} \dt_i \sumK{i}\ \sumEDE{i} \frac{(p_\eps^{i,n+1} - p_K^{i,n+1})^2}{\dxy} \meas(\eps) + O(h^{2\eta_4}+\dt_i^{2\gamma_4})
\end{aligned}
\end{gather*}
We now focus on the interface terms in \eqref{eq_gen} starting with transmission scheme \eqref{def_erreur_transmission}.  By \eqref{estim} of lemma~\ref{lemme} :
\begin{equation*}
\begin{aligned}
\sumi \lprod u^i,p^i \rprod =& \lprod p^m,\U_{\E} \rprod + \lprod U_{\E}^e , \P_{\E} \rprod \\
&+ \sumn{e} \dt_e \sumeps \frac{p_\eps^{e,n+1}-p_K^{e,n+1}}{\dxy} \P_\eps^{n+1} \meas(\eps) \\
\leqslant& \frac{C_5}{2} \sumn{m} \dt_m \sumeps (p_\eps^{m,n+1})^2 \meas(\eps) 
+ \frac{C_6}{2} \sumn{e} \dt_e \sumeps \frac{(p_\eps^{e,n+1}-p_K^{e,n+1})^2}{\dxy} \meas(\eps) \\
&+ O(h)^{\min(\eta_1+\eta_6,2\eta_6,2\eta_7)} + O(\dt_2)^{\min(\gamma_1+\gamma_6,2\gamma_6,2\gamma_7)}
\end{aligned}
\end{equation*}
The analysis of the interface term in \eqref{eq_gen} with the transmission scheme \eqref{def_erreur_transmission_rec} is more involved. By \eqref{estim_rec} of lemma~\ref{lemme_rec}, we have the following additional term:
\begin{multline*}
-\lprod u^e, d_\E^e U_{\E}^e + d_\E^m (\U_{\E}-U_{\E}^m) \rprod = 
\underbrace{-\sumn{e} \dt_e \sumE{e} d_\eps^e (U_\eps^{e,n})^2 \meas(\eps)}_{=O(h^{2\eta_1}+\dt_e^{2\gamma_1})} \\
\underbrace{-\sumn{e} \dt_e \sumE{e} d_\eps^e \frac{p_\eps^{e,n}-p_{K_e(\eps)}^{e,n}}{d_\eps^e} U_\eps^{e,n} \meas(\eps)}_{\leqslant \frac{C_1'}{2}\sumn{e} \dt_e \sumE{e} \frac{(p_\eps^{e,n}-p_{K_e(\eps)}^{e,n})^2}{d_\eps^e} \meas(\eps) + O(h^{2\eta_1}+\dt_e^{2\gamma_1})}
\underbrace{-\sumn{e} \dt_e \sumE{e} d_\eps^m U_\eps^{e,n} (\U_{\eps}^n-U_{\eps}^{m,n}) \meas(\eps)}_{=O(h^{2\min(\eta_1,\eta_7)}+\dt_m^{2\min(\gamma_1,\gamma_7)})} \\
\underbrace{-\sumn{e} \dt_e \sumE{e} d_\eps^m \frac{p_\eps^{e,n}-p_{K_e(\eps)}^{e,n}}{d_\eps^e} (\U_{\eps}^n-U_{\eps}^{m,n}) \meas(\eps)}_{\leqslant \frac{C_2'}{2}\sumn{e} \dt_e \sumE{e} \frac{(p_\eps^{e,n}-p_{K_e(\eps)}^{e,n})^2}{d_\eps^e} \meas(\eps) + O(h^{2\min(\eta_1,\eta_7)}+\dt_m^{2\min(\gamma_1,\gamma_7)}) \sumn{e} \dt_e \sumE{e} \frac{(d_\eps^m)^2}{d_\eps^e} \meas(\eps)}
\end{multline*}
By assumption~\ref{hyp_alpha}, the last term is under control since $\sumn{e} \dt_e \sumE{e} \frac{(d_\eps^m)^2}{d_\eps^e} \meas(\eps) \leqslant \alpha T \meas(\O_m)$. \\

Summing up all these estimates, we have:
\begin{multline*}
\demi \sumi \sumK{i} (p_K^{i,N_i})^2 \meas(K) + \left( \demi - \frac{C_3}{4} \right) \sumi \sumn{i} \dt_i \sumK{i}\ \sumKp{i} \frac{(p_{K'}^{i,n+1}-p_K^{i,n+1})^2}{d(x_K,x_{K'})} \meas([KK']) \\
 + \left( 1 - \frac{C_1}{2} - \frac{C_1'}{2} - \frac{C_2'}{2} - \frac{C_4'}{2} - \frac{C_6}{2} \right) \sumi \sumn{i} \dt_i \sumK{i}\ \sumEDE{i} \frac{(p_\eps^{i,n+1}-p_K^{i,n+1})^2}{\dxy} \meas(\eps) \\
 \\
\leqslant \demi \sumi \sumK{i} (p_K^{i,0})^2 \meas(K) + \frac{C_2+C_5}{2} \sumi \sumn{i} \dt_i \sumK{i} (p_K^{i,n+1})^2 \meas(K) \\ 
+ \frac{C_4+C_7}{2} \sumi \sumn{i} \dt_i \sumK{i}\ \sumE{i} (p_\eps^{i,n+1})^2 \meas(\eps) \\
+ O(h)^{\min((2\eta_j)_{j=1\ldots7},\eta_1+\eta_6)} + O(\dt_2)^{\min((2\gamma_j)_{j=1\ldots7},\gamma_1+\gamma_6)}
\end{multline*}
where $C_1'$ and $C_2'$ are zero for the scheme~\eqref{def_erreur_transmission}
In other words, we have:
\begin{multline*}
\demi \sumi \|p^{i,N_i}\|_{L^2(\O_i)}^2 + \min \left( \demi - \frac{C_3}{4}, 1 - \frac{C_1}{2}- \frac{C_1'}{2}- \frac{C_2'}{2} - \frac{C_4'}{2} - \frac{C_6}{2} \right) \sumi |p^i|_{1,\T_i,\dt_i}^2 \\\leqslant \demi \sumi \|p^{i,0}\|_{L^2(\O_i)}^2 
+ \frac{C_2+C_5}{2} \sumi \|p^i\|_{L^2(0,T;L^2(\O_i))}^2 + \frac{C_4+C_7}{2} \sumi \|p^i\|_{L^2(0,T;L^2(\G))}^2 \\
+ O(h)^{\min((2\eta_j)_{j=1\ldots7},\eta_1+\eta_6)} + O(\dt_2)^{\min((2\gamma_j)_{j=1\ldots7},\gamma_1+\gamma_6)}
\end{multline*}
We notice that $\min(2\eta_1,2\eta_6,\eta_1+\eta_6) = 2\min(\eta_1,\eta_6)$. Moreover, using a result in \cite{EGH00} (discrete Poincar\'e inequality, lemma~3.1), we have:
\begin{equation} \label{poincare}
\|p^{i}\|_{L^2(0,T;L^2(\O_i))} \leqslant \diam(\O_i) |p^{i}|_{1,\T_i,\dt_i} \quad \forall i \in \{1,2\} 
\end{equation}
We also use the discrete trace estimate proved in \cite{GHV00} :
\begin{equation} \label{trace}
 \|p^{i}\|_{L^2(0,T;L^2(\partial\O_i))}^2 \leqslant C(\O_i) \left( \|p^{i}\|_{L^2(0,T;L^2(\O_i))}^2 + |p^{i}|_{1,\T_i,\dt_i}^2 \right) \quad \forall i \in \{1,2\}
\end{equation}
Taking  small enough constants (independently of $h$) suffices to end the proof of  
Theorem~\ref{theorem}.
\end{proof}

\section{Well posedness}

\begin{theorem} \label{theorem_bien_pose}
We assume that assumption~\ref{hyp_maillage} holds. \\
Then, the problem defined by \eqref{eq_schema}-\eqref{rel_p_u} and interface scheme either \eqref{cond_trans} or  \eqref{cond_trans_rec} is well-posed.

\end{theorem}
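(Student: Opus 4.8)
The plan is to reduce well-posedness of this finite-dimensional linear system to a uniqueness statement, and then to extract uniqueness from the energy machinery already assembled in Lemmas~\ref{lemme_sans_int}, \ref{lemme} and \ref{lemme_rec}. First I would observe that the unknowns $(p_K^{i,n},p_\eps^{i,n},u_\eps^{i,n})$ range over a finite set and that the balance equations \eqref{eq_schema}, the flux relation \eqref{rel_p_u}, the Dirichlet conditions, and the chosen interface scheme \eqref{cond_trans} or \eqref{cond_trans_rec} together form a \emph{square} linear system: one balance per cell, one flux relation per face, one Dirichlet relation per boundary face, and the matching number of interface relations per interface face across the coupled fine/coarse time levels. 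For a square system, existence for every right-hand side is equivalent to uniqueness, so it suffices to show that the homogeneous problem (source $f\equiv 0$, initial datum $p_0\equiv 0$) admits only the zero solution.

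Next I would specialize the error-augmented framework of Section~\ref{sec:stability} to the exact scheme by setting every consistency and transmission error term to zero: $R_K^{i,n}=R_{KK'}^{i,n}=R_\eps^{i,n}=F_K^{i,n}=0$, $U_\eps^{i,n}=0$, and $\P_\eps=\U_\eps=0$. With these choices \eqref{eq_schema_gen}--\eqref{def_erreur_consistance} reduce exactly to \eqref{eq_schema}--\eqref{rel_p_u} with vanishing source, and \eqref{def_erreur_transmission} (resp.\ \eqref{def_erreur_transmission_rec}) reduces to \eqref{cond_trans} (resp.\ \eqref{cond_trans_rec}). Assumption~\ref{hyp_RKK} holds trivially since $0+0=0$, so Lemma~\ref{lemme_sans_int} applies and yields \eqref{eq_gen}, in which every right-hand term carrying a factor $R$, $F$ or $U$ vanishes and, because $p_0\equiv 0$, the initial term $\demi\sumi\sumK{i}(p_K^{i,0})^2\meas(K)$ vanishes as well. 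It therefore remains only to control the interface term $\sumi\lprod u^i,p^i\rprod$.

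For the interface scheme \eqref{cond_trans} I would invoke Lemma~\ref{lemme}: with $\U_\E=\P_\E=0$ its conclusion \eqref{estim} gives $\sumi\lprod u^i,p^i\rprod=0$. For the overlapping scheme \eqref{cond_trans_rec} I would invoke Lemma~\ref{lemme_rec}: since $\U_\E$, $\P_\E$, $U_\E^e$ and $U_\E^m$ all vanish, every term on the right-hand side of \eqref{estim_rec} disappears, so $\sumi\lprod u^i,p^i\rprod\leqslant 0$. I would stress that in this second case the terms that previously forced Assumption~\ref{hyp_alpha} each carry a factor $U_\E^e$, $\U_\E$ or $\P_\E$ and hence vanish identically, which is precisely why the present statement needs only Assumption~\ref{hyp_maillage}. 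In either case the left-hand side of \eqref{eq_gen}, a sum of nonnegative quantities controlling the full discrete seminorm $\sumi|p^i|_{1,\T_i,\dt_i}^2$, is bounded above by $0$.

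Finally I would conclude that $|p^i|_{1,\T_i,\dt_i}=0$ for $i=1,2$. The discrete Poincar\'e inequality \eqref{poincare} then forces $\|p^i\|_{L^2(0,T;L^2(\O_i))}=0$, so all cell values $p_K^{i,n}$ vanish; the vanishing of the seminorm also forces $p_\eps^{i,n}=p_{K_i(\eps)}^{i,n}=0$ on every interior and boundary face, whence \eqref{rel_p_u} gives $u_\eps^{i,n}=0$. Thus the homogeneous problem has only the trivial solution, which establishes uniqueness and, via the squareness reduction, well-posedness. I expect the only point genuinely needing care to be the bookkeeping that the equation/unknown count is balanced despite the nonmatching time grids and the extra interface unknowns; once squareness is granted, the energy argument above is routine, so that combinatorial verification, rather than any analytic estimate, is the main obstacle.
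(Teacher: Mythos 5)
Your proposal is correct and, up to the final step, is exactly the paper's proof: reduce well-posedness of the square linear system to uniqueness for the homogeneous problem, specialize \eqref{eq_schema_gen}--\eqref{def_erreur_consistance} and \eqref{def_erreur_transmission} (resp.\ \eqref{def_erreur_transmission_rec}) to zero error terms, and combine Lemma~\ref{lemme_sans_int} with Lemma~\ref{lemme} (resp.\ Lemma~\ref{lemme_rec}) to conclude that the nonnegative left-hand side of \eqref{eq_gen} is $\leqslant 0$; your remark that the terms which would require Assumption~\ref{hyp_alpha} all carry factors $U_\E^e$, $\U_\E$ or $\P_\E$ and hence vanish is indeed the reason the theorem needs only Assumption~\ref{hyp_maillage}. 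Where you diverge is the concluding deduction. The paper never invokes the discrete Poincar\'e inequality \eqref{poincare} here: it argues that the vanishing of the seminorm terms forces each $p^{i,n}$ to be spatially constant with all face values equal to that constant, hence all fluxes zero by \eqref{rel_p_u}; then \eqref{eq_schema} itself collapses to $p_K^{i,n+1}=p_K^{i,n}$, and induction from the zero initial data gives zero. Your route via \eqref{poincare} is admissible within the paper's toolkit (the paper asserts \eqref{poincare} for both subdomains in the stability proof), but it is strictly less robust: \eqref{poincare} implicitly requires $\E_{iD}\neq\emptyset$ for each $i$, since for a subdomain that does not touch $\partial\Omega$ the spatially constant function with matching interface values has zero seminorm but nonzero $L^2$ norm, so the inequality fails there, whereas the paper's induction-in-time argument needs no such property and is purely algebraic. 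So your proof stands under the geometry the paper implicitly has in mind (both subdomains meeting the Dirichlet boundary), but the paper's concluding step is both more elementary and covers the degenerate configuration; it would be worth replacing your Poincar\'e step by that induction to make the argument unconditional.
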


\begin{proof} In both cases, we have a square linear system. It is thus sufficient to prove that the only solution with a zero right hand side and initial condition is zero, i.e. 
\begin{equation*}
U_\eps^{i,n} =  R_K^{i,n} =  R_{KK'}^{i,n} =  R_\eps^{i,n} =  F_K^{i,n} = p^{i,0}_K = \P_\eps = \U_\eps = 0 \quad \forall i \in \{1,2\} \quad \forall n \in \{0 \ldots N_i\} \quad \forall K \in \toi\,. 
\end{equation*}
Then, by Lemma~\ref{lemme_sans_int} and Lemma~\ref{lemme} (resp. \ref{lemme_rec}) for transmission scheme  \eqref{cond_trans} (resp. \eqref{cond_trans_rec}), we have
\begin{multline*}
\demi \sumi \sumK{i} (p_K^{i,N_i})^2 \meas(K) + \demi \sumi \sumn{i} \dt_i \sumK{i}\ \sumKp{i} \frac{(p_{K'}^{i,n+1}-p_K^{i,n+1})^2}{d(x_K,x_{K'})} \meas([KK']) \\
+ \sumi \sumn{i} \dt_i \sumK{i}\ \sumEDE{i} \frac{(p_\eps^{i,n+1}-p_K^{i,n+1})^2}{\dxy} \meas(\eps) \leqslant 0
\end{multline*}
that is for all $i=1,2$, $1\le n \le N_i$, $p^{i,n}$ is a constant. By equation~\eqref{eq_schema} in every subdomain, we then have that the value of the constant is independent of $n$. Since the initial condition is zero, the constant is actually zero. 
\end{proof}

\section{Error estimate}

Let $p^1,\,p^2$ be the solution to the continuous problem \eqref{eq_cont}-\eqref{trans_cont}. We define the interpolation on the mesh $\T_1 \cup \T_2$ at time $t_n$ by:
\begin{equation*}
\left.
\begin{aligned}
\tp_K^{i,n} =\;& p^i(x_K,t_n) \quad \forall K \in \T_i \\
\tp_\eps^{i,n} =\;& \frac{1}{\dt_i} \int_{t_{n-1/2}^i}^{t_{n+1/2}^i} \frac{1}{\meas(\eps)} \int_\eps p^i \quad \forall \eps \in \E_i \\
\tp_\eps^{i,n} =\;& 0 \quad \forall \eps \in \E_{iD} \\
\tu_\eps^{i,n} =\;& \frac{1}{\dt_i} \int_{t_{n-1/2}^i}^{t_{n+1/2}^i} \frac{1}{\meas(\eps)} \int_\eps \frac{\partial p^i}{\partial n_i} \quad \forall \eps \in \E_i\cup\E_{iD}
\end{aligned}
\quad \right\} \quad \forall n \in \{0,\ldots,N_i\} \quad \forall i \in \{1,2\}
\end{equation*}
We have to estimate the error terms $e^i_K = p^i_K-\tp_K^i$, $e^i_\eps = p^i_\eps-\tp_\eps^i$ and $q^i_\eps = u^i_\eps-\tu^i_\eps$.

\begin{theorem}
We suppose that the solution has the following regularity:
\begin{gather}
p \in C^1(0,T;C^2(\bar{\O})) \label{reg_p}
\end{gather} 
and that the numerical right hand side is such that:
\begin{gather}
\frac{1}{\dt_i} \int_{t_{n-1/2}^i}^{t_{n+1/2}^i} \frac{1}{\meas(K)} \int_K \left( f_K^{i,n} - f \right) = O(\diam(K) + \dt_i) \quad \forall n \in \{0,\ldots,N_i\} \quad \forall i \in \{1,2\} \label{hyp_F}
\end{gather} 
We assume that Assumptions~\ref{hyp_maillage}, \ref{hyp_yepsilon} hold and that transmission scheme~\eqref{cond_trans}  is used.\\
Then, we have the following estimate:
\begin{equation*}
\sumi |e^i|_{1,\T_i,\dt_i} + \sumi \|e^{i,Ni}\|_{L^2(\O_i)} = O(h+\dt)
\end{equation*}
where $\dt=\max(\dt_1,\dt_2)$. \\
The same estimate holds for transmission scheme~\eqref{cond_trans_rec} if in addition Assumption~\ref{hyp_alpha}  holds. 
\end{theorem}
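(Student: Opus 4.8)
The plan is to obtain the error estimate as a direct application of the stability result Theorem~\ref{theorem} to the pair $(e^i,q^i)$. The first step is to check that the interpolant $(\tp^i,\tu^i)$ of the exact solution solves the perturbed volume scheme \eqref{eq_schema_gen}--\eqref{def_erreur_consistance} together with the perturbed interface conditions \eqref{def_erreur_transmission} (resp.\ \eqref{def_erreur_transmission_rec}) for a suitable choice of the consistency terms $R_K^{i,n}$, $R_{KK'}^{i,n}$, $R_\eps^{i,n}$, $F_K^{i,n}$, $U_\eps^{i,n}$, $\P_\eps$, $\U_\eps$, obtained by defining each term as the defect produced when $\tp^i,\tu^i$ are substituted into \eqref{eq_schema} and \eqref{rel_p_u}. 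The antisymmetry demanded by Assumption~\ref{hyp_RKK} is automatic, since $R_{KK'}^{i,n}$ is the flux defect across $[KK']$, which changes sign under the exchange of $K$ and $K'$. Because the numerical solution $(p^i,u^i)$ satisfies the very same equations with all perturbation terms equal to zero and the same data $f_K^{i,n}$, linearity shows that $(e^i,q^i)=(p^i-\tp^i,u^i-\tu^i)$ solves \eqref{eq_schema_gen}--\eqref{def_erreur_consistance} and the corresponding interface conditions, with perturbation terms equal (up to sign) to the consistency terms of the interpolant, and with an effective source defect controlled by \eqref{hyp_F}; moreover $e_K^{i,0}=p_0(x_K)-\tp_K^{i,0}=O(\dt_i)$, so the initial contribution in Theorem~\ref{theorem} enters at the claimed order.

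The second step is to verify that Assumption~\ref{hyp_R} holds with all exponents equal to $1$. The volume, interior-flux and boundary-flux defects $R_K^{i,n}$, $R_{KK'}^{i,n}$, $R_\eps^{i,n}$, $U_\eps^{i,n}$ and the source defect $F_K^{i,n}$ are the standard truncation errors of the implicit cell-centered finite volume scheme. Using the regularity \eqref{reg_p}, the admissibility of the mesh (Assumption~\ref{hyp_maillage}), the barycenter hypothesis (Assumption~\ref{hyp_yepsilon}) and the source hypothesis \eqref{hyp_F}, Taylor expansion in space (order $h$, exactly as in \cite{EGH00}, where the orthogonality of $[x_Kx_{K'}]$ to the face is used) and in time (order $\dt_i$, from $C^1$ regularity in $t$) gives $R_K^{i,n},R_{KK'}^{i,n},R_\eps^{i,n},U_\eps^{i,n},F_K^{i,n}=O(h+\dt_i)$. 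This part is classical and I would only record the expansions rather than carry them out.

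The third step, which is the genuinely new ingredient, is to estimate the two interface defects $\P_\eps$ and $\U_\eps$. Here one exploits the continuity of the exact trace ($p^1=p^2$ on $\Gamma$) and of the exact normal derivative \eqref{trans_cont}: consequently $\tp_\eps^e$ and $\tp_\eps^m$ are time-averages, on the two time grids, of the single continuous-in-time function $t\mapsto \frac{1}{\meas(\eps)}\int_\eps p|_\Gamma$, and likewise $\tu_\eps^m$ and $-\tu_\eps^e$ are time-averages of $t\mapsto \frac{1}{\meas(\eps)}\int_\eps \partial_{n_m}p^m$. The decisive tool is the explicit form \eqref{schema_constant} of the projections: $\Q_2$ (fine $\to$ coarse) sends fine averages to their exact coarse average, hence maps an average of averages to the exact coarse average, while $\Q_1$ (coarse $\to$ fine) merely copies the coarse value. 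Substituting into $\P_\eps=\tp_\eps^e-\Q_e(\tp_\eps^m)$ and $\U_\eps=\tu_\eps^m+\Q_m(\tu_\eps^e)$, the defect projected with $\Q_2$ vanishes identically, while the other reduces to the difference between a fine and a coarse time-average of a $C^1$ function, hence is $O(\dt_2)$. Thus $\P_\eps,\U_\eps=O(\dt_2)$, so the remaining exponents in Assumption~\ref{hyp_R} are again $1$.

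Finally, applying Theorem~\ref{theorem} to $(e^i,q^i)$ — in case~(ii), i.e.\ for \eqref{cond_trans_rec}, also invoking Assumption~\ref{hyp_alpha} — yields $\sumi |e^i|_{1,\T_i,\dt_i}^2 + 2\sumi \|e^{i,N_i}\|_{L^2(\O_i)}^2 = O(h)^2 + O(\dt_2)^2$, since $\eta=\gamma=2$ and the initial contribution is of order $\dt^2$. Taking square roots and recalling $\dt=\max(\dt_1,\dt_2)=\dt_2$ gives the claimed bound $O(h+\dt)$. I expect the interface estimate of the third step to be the main obstacle: it is the only place where the multiplicative relation $\K\dt_1=\dt_2$ and the mortar-type projection structure must be used, and where the analysis truly departs from the single-grid finite volume theory of \cite{EGH00}.
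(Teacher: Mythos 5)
Your treatment of scheme \eqref{cond_trans} is correct and is essentially the paper's own proof: the same interpolant of the exact solution, the same reduction to the stability result of Theorem~\ref{theorem}, and the same key observation that the interface defect carried by $\Q_2$ vanishes exactly (the average of the fine averages equals the coarse average, by continuity of the trace and of the normal flux across $\Gamma$), while the defect carried by $\Q_1$ is a fine-average minus a coarse-average of a $C^1$ function in time, hence $O(\dt_2)$.

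There is, however, a genuine gap in the second half of the statement, i.e.\ the overlapping scheme \eqref{cond_trans_rec}. Your third step estimates only the defects of \eqref{def_erreur_transmission}, namely $\P_\eps = \tp_\eps^e - \Q_e(\tp_\eps^m)$ and $\U_\eps = \tu_\eps^m + \Q_m(\tu_\eps^e)$, and your conclusion ``$\P_\eps,\U_\eps = O(\dt_2)$'' does not transfer to \eqref{cond_trans_rec}. For that scheme the perturbed Dirichlet condition satisfied by the errors must be re-derived (it picks up the extra terms $d_\eps^m U_\eps^m$ inside the projection and $-d_\eps^e U_\eps^e$ outside, cf.\ \eqref{def_erreur_transmission_rec}), and the resulting defect is
\begin{equation*}
\P_\eps = \Q_e\bigl(\tp^m_{K_m(\eps)}\bigr) - \tp^e_{K_e(\eps)} - (d_\eps^m + d_\eps^e)\,\tu_\eps^e ,
\end{equation*}
which involves cell-center values on both sides of $\Gamma$, not interface traces. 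Estimating it is not a purely temporal mortar-projection argument: one must Taylor-expand in space across the interface, using the admissibility of the mesh (alignment of $x_{K_m(\eps)}$, $y_\eps$, $x_{K_e(\eps)}$ along the normal to $\eps$) together with the continuity of both $p$ and $\partial p/\partial n$ across $\Gamma$, to get
\begin{equation*}
p^1(x_{K_1(\eps)},\cdot) - p^2(x_{K_2(\eps)},\cdot) - d\bigl(x_{K_1(\eps)},x_{K_2(\eps)}\bigr)\,\frac{\partial p^2}{\partial n_2} = O(h+\dt),
\end{equation*}
so that $\P_\eps = O(h+\dt)$ --- of order one in space as well as in time, unlike your claim. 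Invoking Assumption~\ref{hyp_alpha} at the end, as you do, only covers the stability side (Lemma~\ref{lemme_rec}); it does not supply this missing consistency computation, so your application of Theorem~\ref{theorem} in case (ii) is unsupported as written. (Two harmless deviations elsewhere: the paper has $e_K^{i,0}=0$ exactly, not merely $O(\dt_i)$, and $R_\eps^{i,n}=0$ identically by the choice of $\tu_\eps^{i,n}$ as a space--time average; your weaker claims still give the right order.)
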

\begin{proof}
It is easy to check that the errors $e_K$, $e_\eps$ et $q_\eps$ satisfy \eqref{eq_schema_gen}-\eqref{def_erreur_consistance} with error terms defined by 
\begin{itemize}
\item $\ds F_K^{i,n} = \frac{1}{\dt_i} \int_{t_{n-1/2}^i}^{t_{n+1/2}^i} \frac{1}{\meas(K)} \int_K \left( f_K^{i,n} - f \right) $
\item $\ds R_K^{i,n} = \frac{1}{\dt_i} \int_{t^i_{n-1/2}}^{t^i_{n+1/2}} \frac{1}{\meas(K)} \int_K  \frac{\partial p^i}{\partial t} - \frac{\tp_K^{i,n}-\tp_K^{i,n-1}}{\dt} $
\item $\ds R_{KK'}^{i,n} = \frac{1}{\dt_i} \int_{t_{n-1/2}^i}^{t_{n+1/2}^i} \frac{1}{\meas([KK'])} \int_{[KK']} \dpdn{i} - \frac{p^i(x_{K'},t_n)-p^i(x_K,t_n)}{d(x_K,x_{K'})} $
\item $\ds R^{i,n}_\eps  = \frac{1}{\dt_i} \int_{t_{n-1/2}^i}^{t_{n+1/2}^i} \frac{1}{\meas(\eps)} \int_\eps \dpdn{i} -\tu_\eps^{i,n}$
\item $\ds U^{i,n}_\eps = -(\tu_\eps^{i,n}-\frac{\tp_\eps^{i,n} - \tp_{K_i(\eps)}}{d(x_{K_i(\eps)},y_\eps)})$
\item $\ds \U_{\eps,T} = \Q_m(-\tu_{\eps,T}^e) - \tu_{\eps,T}^m$
\item For the scheme \eqref{cond_trans}: $\ds \P_{\eps,T} = \Q_e(\tp_{\eps,T}^m) - \tp_{\eps,T}^e$
\item  For the scheme \eqref{cond_trans_rec}: 
$\ds \P_{\eps,T} = \Q_e\left(\tp_{K_m(\eps)}^m\right) - \tp_{K_e(\eps)}^e - (d_\eps^m+d_\eps^e) \tu_{\eps,T}^e $
\end{itemize}
The derivation of the formula for $\P_{\eps,T}$ and $\U_{\eps,T}$ are made explicit in the sequel when these terms are estimated. Let us remark that we have by construction $e_K^{i,0} = 0$ for all $K\in {\cal T}_i$, $i=1,2$.

By assumption and by using Taylor expansion, it is classical to check that the error terms $R_K$, $R_{KK'}$ and $F_K$ satisfy Assumption~\ref{hyp_R} with $\eta_i=\gamma_i=1$ for $i=2,3,5$. As regards the term $U_\eps$, we proceed as in \cite{Achdou:2002:ANC}. From Assumption~\eqref{reg_p} on the regularity of the solution, we have:
\begin{gather*}
\begin{aligned}
U_\eps^{i,n} =& \frac{\frac{1}{\dt_i} \int_{t_{n-1/2}^i}^{t_{n+1/2}^i} \frac{1}{\meas(\eps)} \int_\eps p^i - p^i(x_{K_i(\eps)},t_n)}{d(x_{K_i(\eps)},y_\eps)} - \frac{1}{\dt_i} \int_{t_{n-1/2}^i}^{t_{n+1/2}^i} \frac{1}{\meas(\eps)} \int_\eps \dpdn{i} \\
=& \frac{\frac{1}{\dt_i} \int_{t_{n-1/2}^i}^{t_{n+1/2}^i} \frac{1}{\meas(\eps)} \int_\eps p^i - p^i(y_\eps,t_n)}{d(x_{K_i(\eps)},y_\eps)} + \left( \frac{p^i(y_\eps,t_n) - p^i(x_{K_i(\eps)},t_n)}{d(x_{K_i(\eps)},y_\eps)} - \dpdn{i}(y_\eps,t_n) \right) \\
&+ \left( \dpdn{i}(y_\eps,t_n) - \frac{1}{\dt_i} \int_{t_{n-1/2}^i}^{t_{n+1/2}^i} \frac{1}{\meas(\eps)} \int_\eps \dpdn{i} \right) \\
=& \frac{O(\diam(\eps))^2}{d(x_{K_i(\eps)},y_\eps)} + O(d(x_{K_i(\eps)},y_\eps)) + O(\diam(\eps)) = O(h) \text{ by assumption \ref{hyp_yepsilon}}
\end{aligned}
\end{gather*}
thus, $\eta_1=1$ and $\gamma_1\ge 1$. 
Since,
\[
R_\eps^{i,n} = \frac{1}{\dt_i} \int_{t_{n-1/2}^i}^{t_{n+1/2}^i} \frac{1}{\meas(\eps)} \int_\eps \left( \dpdn{i} - \frac{1}{\dt_i} \int_{t_{n-1/2}^i}^{t_{n+1/2}^i} \frac{1}{\meas(\eps)} \int_\eps \dpdn{i} \right) = 0\,,
\]
we have $\eta_4,\gamma_4\ge 1$. \\
We now consider the non classical consistency error terms $\P_{\eps}$ et $\U_{\eps}$. For the transmission condition \eqref{cond_trans}  we have:
\begin{equation*}
\begin{aligned}
q_{\eps}^m =& u_{\eps}^m - \tu_{\eps}^m  = \Q_m(-u_{\eps}^e) - \tu_{\eps}^m \\
=& \Q_m(-q_{\eps}^e-\tu_{\eps}^e) - \tu_{\eps}^m = \Q_m(-q_{\eps}^e) + \underbrace{\Q_m(-\tu_{\eps}^e) - \tu_{\eps}^m}_{\U_{\eps}} 
\end{aligned}
\end{equation*}
The error on the transmission condition on the interface reads:
\begin{equation*}
\begin{aligned}
e_{\eps}^e =& p_{\eps}^e - \tp_{\eps}^e  = \Q_e(p_{\eps}^m) - \tp_{\eps}^e \\
=& \Q_e(e_{\eps}^m+\tp_{\eps}^m) - \tp_{\eps}^e = \Q_e(e_{\eps}^m) + \underbrace{\Q_e(\tp_{\eps}^m) - \tp_{\eps}^e}_{\P_{\eps}}
\end{aligned}
\end{equation*}

If $(m,e)=(1,2)$, we get :
\begin{gather*}
\begin{aligned}
\P_\eps^n =& \left[\Q_2(\tp_{\eps,T}^1)\right]^n - \tp_\eps^{2,n} \\
=& \frac{\dt_1}{\dt_2} \sum_{k=1}^{K} \frac{1}{\dt_1} \int_{t_{n,k-1/2}}^{t_{n,k+1/2}} \frac{1}{\meas(\eps)} \int_\eps p^1 - \frac{1}{\dt_2} \int_{t_{n-1/2}}^{t_{n+1/2}} \frac{1}{\meas(\eps)} \int_\eps p^2 = 0
\end{aligned} \\
\begin{aligned}
\U_\eps^{n,k} =& \left[ \Q_1(-\tu_\eps^2)^{n,k} - \tu_\eps^1 \right]^{n,k} \\
=& -\tu_\eps^{2,n} - \tu_\eps^{1,n,k} \\
=& -\frac{1}{\dt_2} \int_{t_{n-1/2}}^{t_{n+1/2}} \frac{1}{\meas(\eps)} \int_\eps \dpdn{2} - \frac{1}{\dt_1} \int_{t_n^{k-1/2}}^{t_n^{k+1/2}} \frac{1}{\meas(\eps)} \int_\eps \dpdn{1} = O(\dt_2)
\end{aligned}
\end{gather*}
If $(m,e)=(2,1)$, we have 
\begin{gather*}
\begin{aligned}
\P_\eps^{n,k} =& \left[\Q_1(\tp_{\eps,T}^2)\right]^{n,k} - \tp_\eps^{1,n,k} \\ 
=& \tp_\eps^{2,n} - \tp_\eps^{1,n,k} \\
=& \frac{1}{\dt_2} \int_{t_{n-1/2}}^{t_{n+1/2}} \frac{1}{\meas(\eps)} \int_\eps p^2 - \frac{1}{\dt_1} \int_{t_n^{k-1/2}}^{t_n^{k+1/2}} \frac{1}{\meas(\eps)} \int_\eps p^1 = O(\dt_2)
\end{aligned} \\
\begin{aligned}
\U_\eps^n =& \left[ \Q_2(-\tu_\eps^1) - \tu_\eps^2 \right]^n \\
=& -\frac{\dt_1}{\dt_2} \sum_{k=1}^\K \tu_\eps^{1,n,k} - \tu_\eps^{2,n} \\
=& \frac{\dt_1}{\dt_2} \sum_{k=1}^\K \frac{1}{\dt_1} \int_{t_n^{k-1/2}}^{t_n^{k+1/2}} \frac{1}{\meas(\eps)} \int_\eps \dpdn{1} - \frac{1}{\dt_2} \int_{t_{n-1/2}}^{t_{n+1/2}} \frac{1}{\meas(\eps)} \int_\eps \dpdn{2} = 0
\end{aligned}
\end{gather*}

We consider now transmission scheme \eqref{cond_trans_rec}: $\U_{\eps}$ is left unchanged and $\P_{\eps}$ now reads :
\begin{equation*}
\begin{aligned}
e_\eps^e + d_\eps^m q_\eps^e =& p_\eps^e - \tp_\eps^e + d_\eps^m (u_\eps^e - \tu_\eps^e)  \\
=& \Q_e(p_\eps^m - d_\eps^m u_{\eps}^m) - \tp_\eps^e - d_\eps^m \tu_{\eps}^e \\
=& \Q_e(e_\eps^m + \tp_\eps^m - d_\eps^m q_{\eps}^m - d_\eps^m \tu_{\eps}^m) - \tp_\eps^e - d_\eps^m \tu_{\eps}^e \\
=& \Q_e(e_\eps^m - d_\eps^m q_{\eps}^m + d_\eps^m U_{\eps}^m) - d_\eps^e U_{\eps}^e + \underbrace{\Q_e\left(\tp_{K_m(\eps)}^m\right) - \tp_{K_e(\eps)}^e - (d_\eps^m+d_\eps^e) \tu_{\eps}^e}_{\P_{\eps}} \\
\end{aligned}
\end{equation*}

If $(m,e)=(1,2)$, we have
\begin{equation*}
\begin{aligned}
\P_\eps^n =& \left[\Q_2\left(\tp_{K_1(\eps)}^1\right)\right]^n - \tp_{K_2(\eps)}^{2,n} - (d_\eps^1+d_\eps^2) \tu_\eps^{2,n} \\
=& \frac{\dt_1}{\dt_2} \sumk p^1(x_{K_1(\eps)},t_n^k) - p^2(x_{K_2(\eps)},t_n) - (d_\eps^1+d_\eps^2) \frac{1}{\dt_2} \int_{t_{n-1/2}^i}^{t_{n+1/2}^i} \frac{1}{\meas(\eps)} \int_\eps \dpdn{2} \\
=& \frac{1}{\dt_2} \sumk \int_{t_n^{k-1/2}}^{t_n^{k+1/2}} \frac{1}{\meas(\eps)} \int_\eps \left( p^1(x_{K_1(\eps)},t_n^k) - p^2(x_{K_2(\eps)},t_n) - d(x_{K_1(\eps)},x_{K_2(\eps)}) \dpdn{2} \right) \\
=& O(h+\dt)
\end{aligned}
\end{equation*}

If $(m,e)=(2,1)$, $\P_{\eps}$ reads :
\begin{equation*}
\begin{aligned}
\P_\eps^{n,k} =& \left[\Q_1\left(\tp_{K_2(\eps)}^2\right)\right]^{n,k} - \tp_{K_1(\eps)}^{1,n,k} - (d_\eps^1+d_\eps^2) \tu_\eps^{1,n,k} \\
=& \frac{1}{\dt_1} \int_{t_n^{k-1/2}}^{t_n^{k+1/2}} \frac{1}{\meas(\eps)} \int_\eps \left( p^2(x_{K_2(\eps)},t_n) - p^1(x_{K_1(\eps)},t_n^k) - d(x_{K_1(\eps)},x_{K_2(\eps)})  \dpdn{1} \right) \\
=& O(h+\dt)
\end{aligned}
\end{equation*}
For all cases, we have thus $\eta_6, \eta_7, \gamma_6, \gamma_7 \ge 1$. Then the error estimate follows from Theorem~\ref{theorem}. 
\end{proof}


\section{Numerical results}
\label{sec:resnum}
 
In this part, we illustrate the method with a parabolic equation coming from a previous article of Ewing and Lazarov  \cite{EL94}.
We consider the (IS$_2$) interface conditions, i.e. equation (\ref{eq:master2_overlap}) in one dimension, which are more natural. 
We solve the following  model problem :
\begin{eqnarray}
\frac{\partial }{\partial t}p(x,t) - \frac{\partial^2 p}{\partial x^2}(x,t) &=& f(x,t)  \quad \forall t \in [0.,0.1] \quad \forall x \in [0.,1.] \\ 
p(x,t) &=& 0  , \forall x \in \partial \Omega\ \\
p(x,0) &=& 0
\end{eqnarray}
The following function is used as an exact solution :
\[ p(x,t) = exp(20(t-t^2) - 37 x^2 + 8x - 1) \]
\begin{figure}[h]
  \centering
  \includegraphics[width=0.45\linewidth]{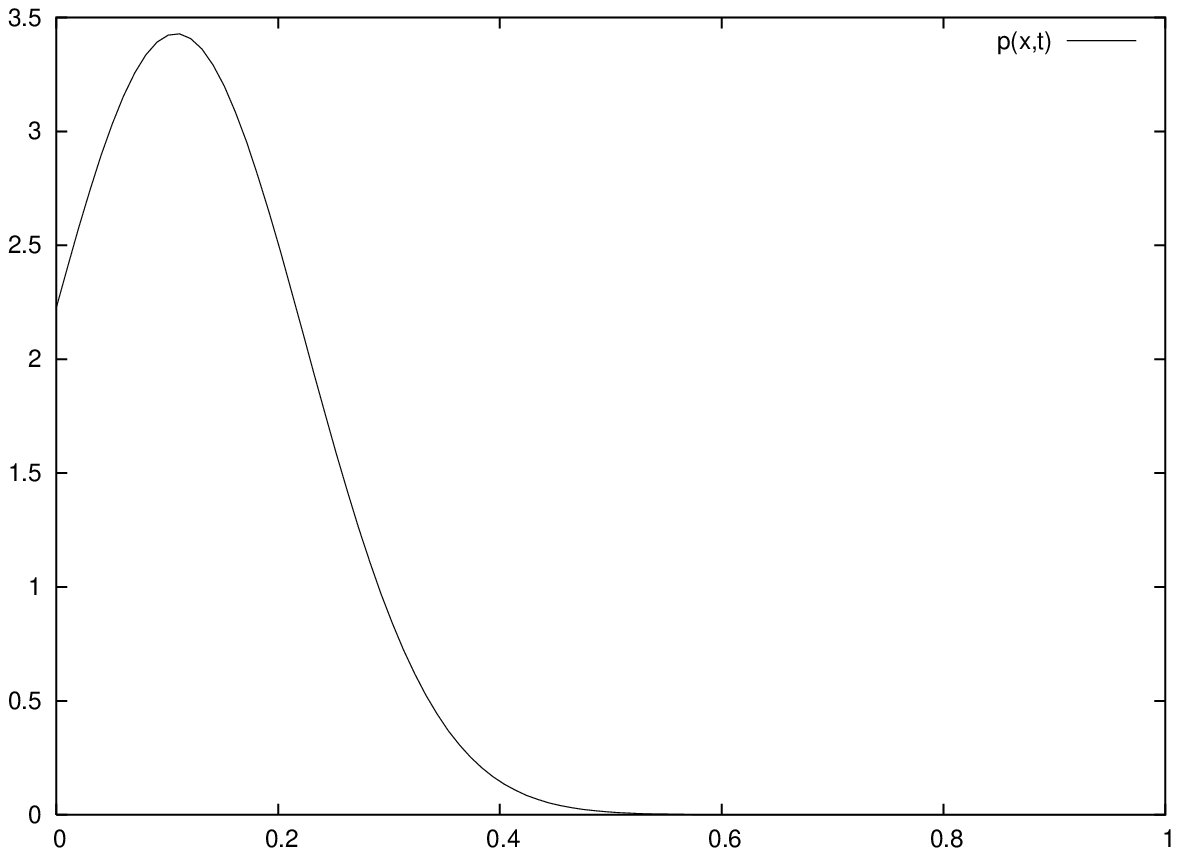}
  \caption{Exact solution for t=0.1}
\end{figure}
This function represents a bump with a maximum value near  the position $x=0.15$. In the interval $[0.5, 1.]$, the function is  close to 0. In this interval, the function changes negligibly in time. In contrast, the function changes rapidly in time in the interval $[0.,0.5]$ and simulates a local behavior.

We use two different time step sizes : 
\begin{itemize}
\item  a fine time step  $\delta t_1 = 0.002$  in the subdomain $\Omega_1 = [0.,0.25]$, discretized with a fine grid $\delta x_1 = 0.01$
\item a coarse time step $\delta t_2 = 0.02$   in the subdomain $\Omega_2 =[0.25,1.]$, discretized with a coarse grid $\delta x_2 = 0.05$
\end{itemize}

The interface is placed at $x=0.25$ . This is a worst case since  the domain with  local refinement only partially covers the interval $[0.,0.25]$  where the solution changes quickly.

We consider two cases.  The first one (coarse master)  is when the coarse domain enforces the  Dirichlet condition, see equation (\ref{eq:master2_overlap}).
 The second one (fine master) is when the refined  domain enforces  the Dirichlet condition, see equation (\ref{eq:master1_overlap}). 
We make a comparison with the  algorithm given by Mlacnik and Heinemann \cite{M02,MH01}. 
In the following pictures, we plot  the evolution of the errors in space and the time evolution of the $L^2$ norm of the error
between the exact solution, the two local time step methods and the solution with the fine or coarse time step on the whole domain.

At each coarse time step, we solve the set of discretized equations using the iterative algorithm explained in section \ref{section:SolutionMethod} with the stopping criterium  $\varepsilon = 10^{-5}$. In the following figures, we plot the error between these two solutions and the exact solution. For completeness, we also plot the error
for a computation with either the coarse or the fine time step on the whole domain.  The number of iterations needed to reach the convergence is quite small; it is about 6 for the fine master method and about 8 for the coarse master method.

We notice that for both cases, the error is significantly smaller than the one of the coarse time step. Morever, for the fine master method, see equation(\ref{eq:master1_overlap}), the error is close to the fine time step error in the refined zone, see figure \ref{err::it}. 

As explained in section \ref{section:SolutionMethod}, it is not necessary to iterate until convergence the algorithm to obtain a conservative method. In figure \ref{err::one}, we plot the error after only one iteration of the corrector stage. As expected,the errors are larger than with the converged solutions.
Let us recall, that the method proposed by Mlacnik  corresponds to the fine master curve in figure \ref{err::one}.

\begin{figure}[htp]
\centering
\includegraphics[angle=-90,width=.45\linewidth]{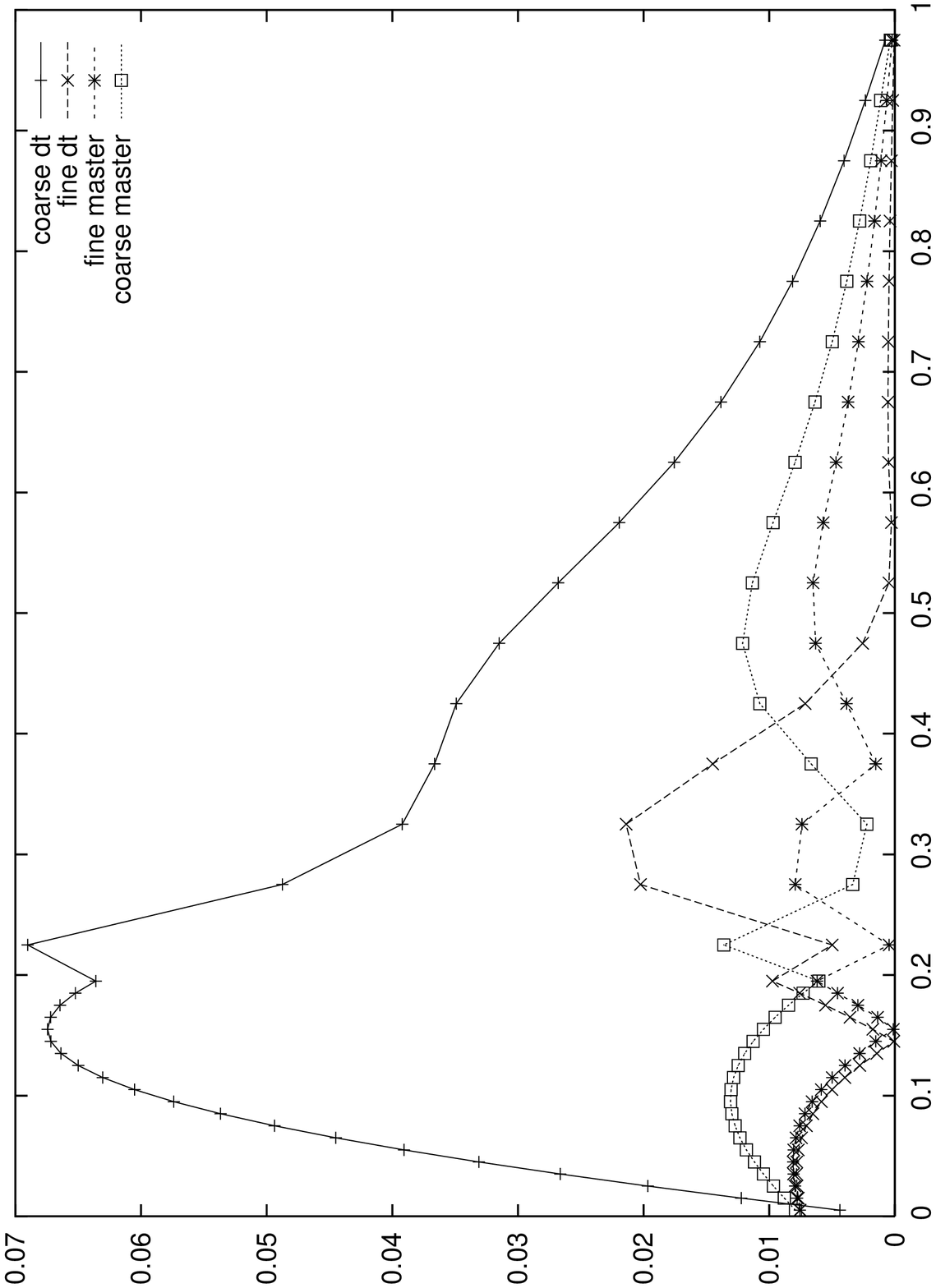}
\includegraphics[angle=-90,width=.45\linewidth]{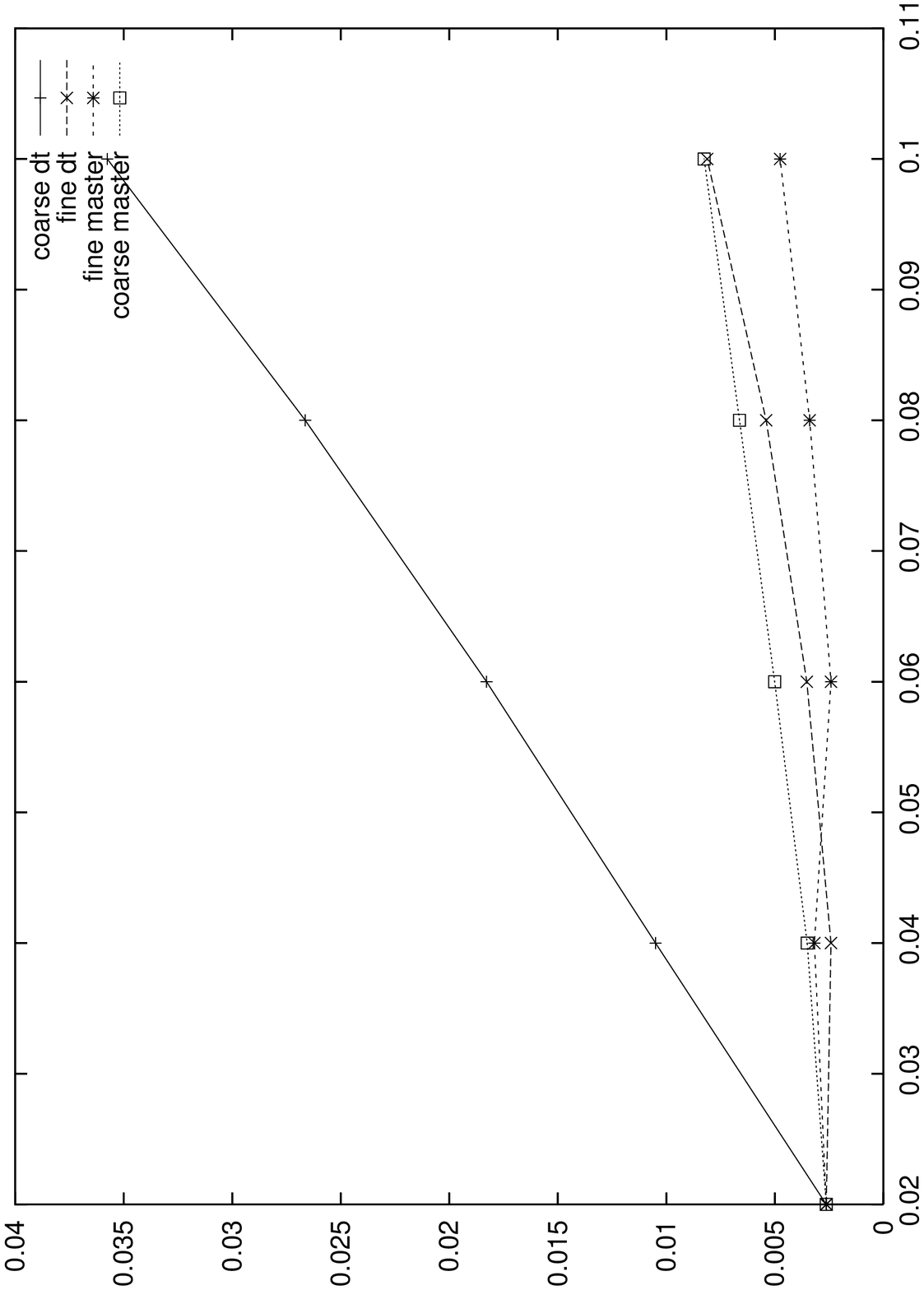}
\caption{After convergence, on left : error in space with the exact solution at time  $t=0.1$, on right : time evolution of the $L^2$ norm of the error.}\label{err::it}
\includegraphics[angle=-90,width=.45\linewidth]{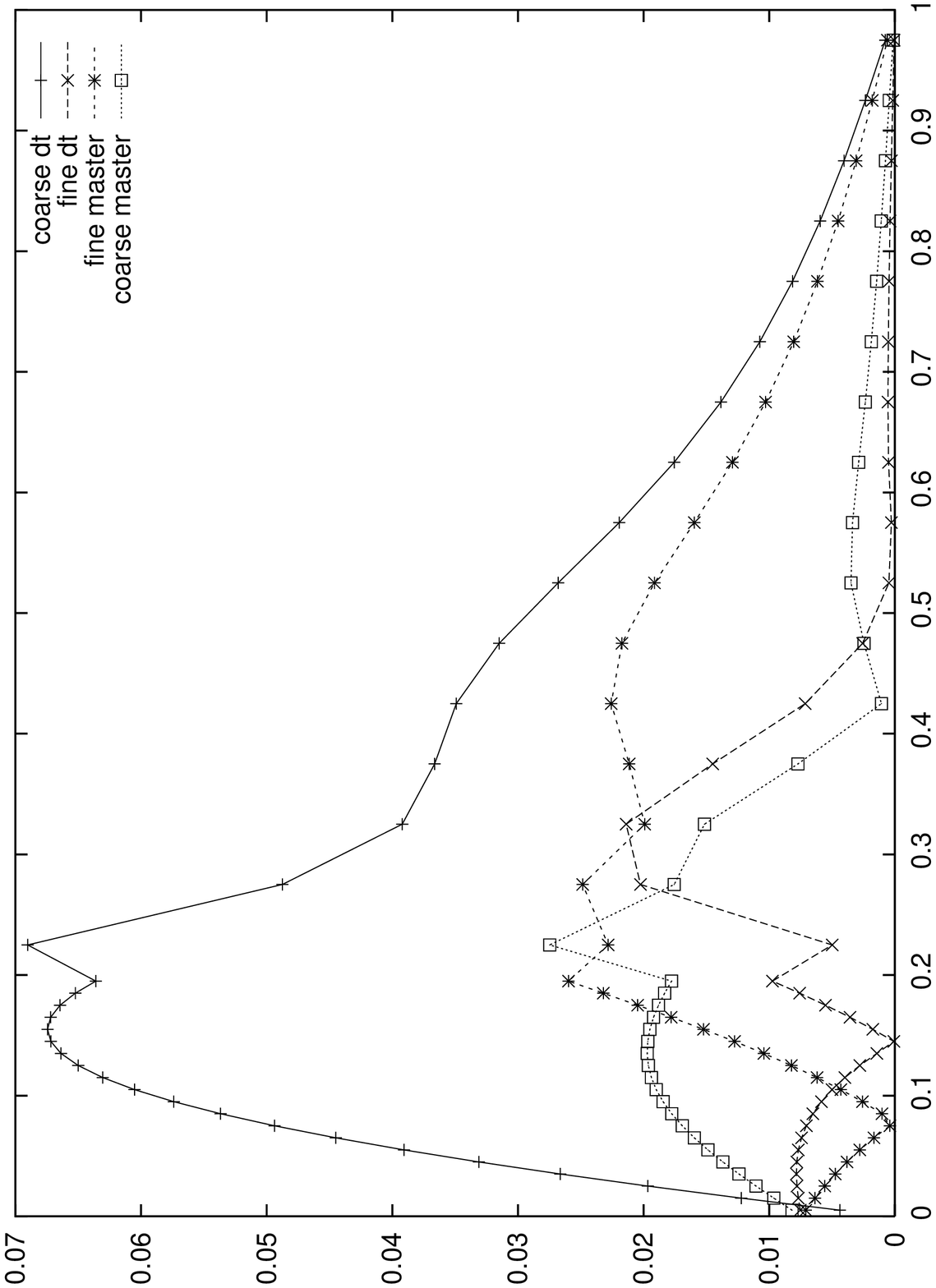}
\includegraphics[angle=-90,width=.45\linewidth]{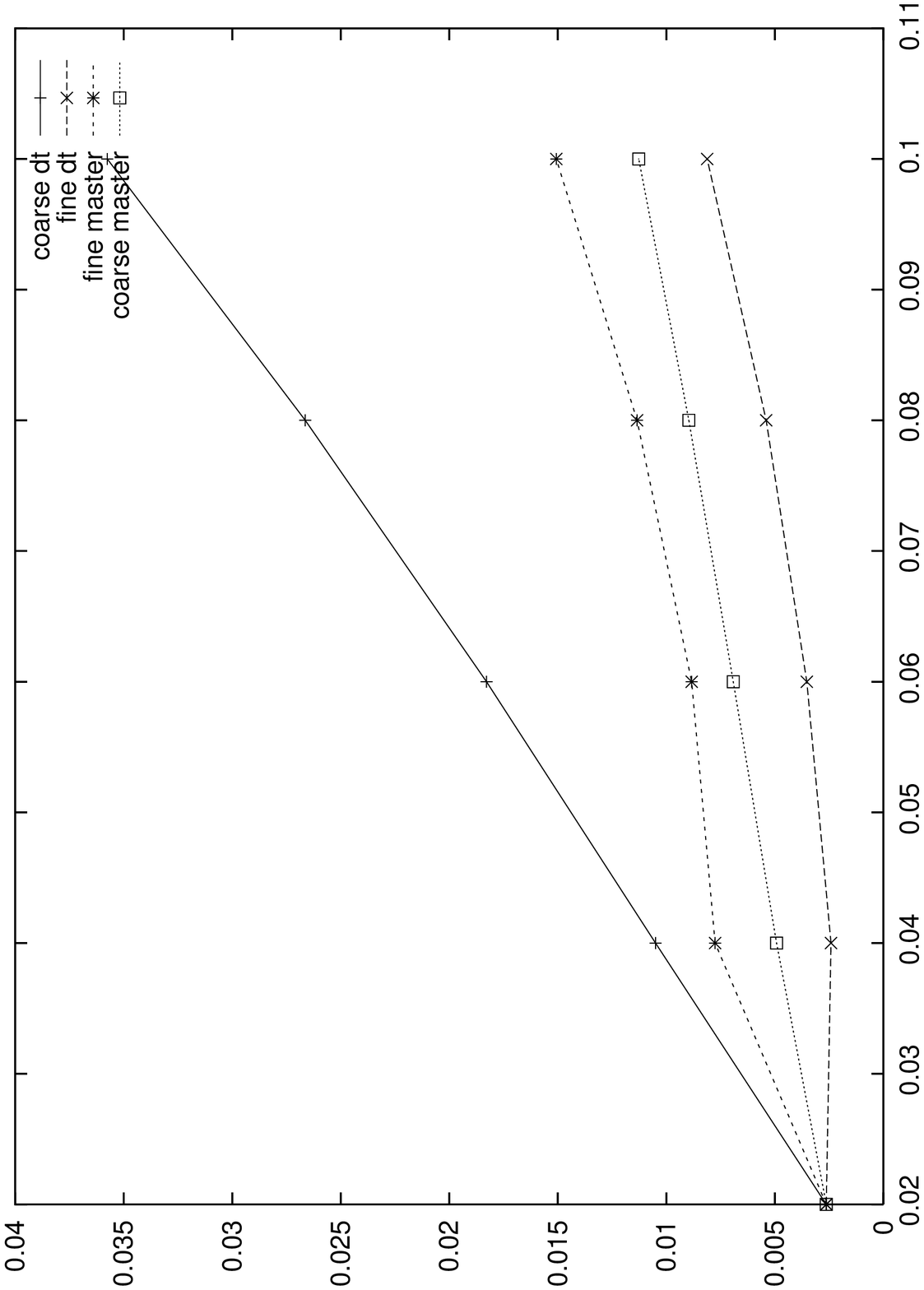}
\caption{After one iteration, on left : error in space with the exact solution at time  $t=0.1$, on right :  time evolution of the $L^2$ norm of the error.}\label{err::one}

\end{figure}


\section{Conclusion}
\label{sec:conclusion}

We have proposed a local time step strategy for solving problems on grids with different time steps in different regions. We have analyzed two schemes: \eqref{cond_trans} and \eqref{cond_trans_rec}. In \eqref{cond_trans}, the coupling involves additional interface unknowns. Scheme~\eqref{cond_trans_rec} is written only in terms of ``classical'' finite volume unknowns. Both schemes are conservative, of order one in space and time. The assumptions are more restrictive for \eqref{cond_trans_rec} than for \eqref{cond_trans}. We have presented an iterative solution method for solving the composite grid system. Its main feature is that at every stage, conservativity is ensured. Numerical tests on a toy problem confirm the capabilities of the method. The scheme is being implemented in a multiphase three dimensional simulation code.


:
\bibliographystyle{plain}
\bibliography{localtimestep}

\begin{thebibliography}{10}

\bibitem{Achdou:2002:ANC}
Yves Achdou, Caroline Japhet, Yvon Maday, and Fr{\'e}d{\'e}ric Nataf.
\newblock A new cement to glue non-conforming grids with {R}obin interface
  conditions: the finite volume case.
\newblock {\em Numer. Math.}, 92(4):593--620, 2002.

\bibitem{Bernardi:1994:NNA}
Christine Bernardi, Yvon Maday, and Anthony~T.\ Patera.
\newblock A new non conforming approach to domain decomposition: The mortar
  element method.
\newblock In Haim Brezis and Jacques-Louis Lions, editors, {\em Coll\`{e}ge de
  France Seminar}. Pitman, 1994.
\newblock This paper appeared as a technical report about five years earlier.

\bibitem{DDD91}
C.N. Dawson, Q.~Du, and T.F. Dupont.
\newblock A finite difference domaine decomposition algorithm for numerical
  solution of the heat equation.
\newblock {\em Math. Comp.}, 57(195):63--71, 1991.

\bibitem{DKH95}
F.X. Deimbacher, F.~Komlosi, and Z.E. Heinemann.
\newblock Fundamental concepts and potential applications of the windowing
  technique in reservoir simulation.
\newblock In {\em Paper SPE 29851 presented at the SPE Middle East Oil Show,
  Bahrain}, paper SPE 29851, March 1995.

\bibitem{EL94}
R.E. Ewing and R.D. Lazarov.
\newblock Approximation of parabolic problems on grids locally refined in time
  and space.
\newblock {\em Applied Numerical Mathematics}, 14(1-3):199--211, 1994.

\bibitem{ELV90}
R.E. Ewing, R.D. Lazarov, and P.S. Vassilevski.
\newblock Finite difference schemes on grids with local refinement in time and
  space for parabolic problems {I}. {D}erivation, stability, and error
  analysis.
\newblock {\em Computing}, 45(3):193--215, 1990.

\bibitem{EGH00}
R.~Eymard, T.~Gallou\"et, and R.~Herbin.
\newblock Finite volume methods.
\newblock In P.~Ciarlet and J.-L. Lions, editors, {\em Handbook of numerical
  analysis}, volume~7, pages 713--1020. North {H}olland edition, 2000.

\bibitem{GHV00}
T.~Gallou\"et, R.~Herbin, and M.H. Vignal.
\newblock Error estimates on the approximate finite volume solution of
  convection diffusion equations with general boundary conditions.
\newblock {\em J. Numer. Anal.}, 37:1935--1972, 2000.

\bibitem{M02}
M.J. Mlacnik.
\newblock {\em Using well windows in full field reservoir simulation}.
\newblock PhD thesis, University of Leoben, Austria, 2002.

\bibitem{MH01}
M.J. Mlacnik and Z.E. Heinemann.
\newblock Using well windows in full field reservoir simulation.
\newblock In {\em Proc. of the SPE Reservoir Simulation Symposium, Houston, TX,
  U.S.A.}, paper SPE 66371, February 2001.

\bibitem{quarteroni}
A.~Quarteroni and A.~Valli.
\newblock {\em Domain Decomposition Methods for Partial Differential
  Equations}.
\newblock OXFORD University Press, 1999.

\bibitem{SV2000}
G.~I. Shishkin and P.~N. Vabishchevich.
\newblock Interpolation finite difference schemes on grids locally refined in
  time.
\newblock {\em Comput. Methods Appl. Mech. Engrg.}, 190(8-10):889--901, 2000.

\end{thebibliography}

\end{document}